\numberwithin{equation}{section}
\newtheorem{prop}{Proposition}
\newtheorem{lemma}[prop]{Lemma}
\newtheorem{thm}[prop]{Theorem}
\newtheorem{cor}[prop]{Corollary}
\numberwithin{prop}{section}
\newtheorem*{ack}{Acknowledgements}
\theoremstyle{definition}
\newtheorem{rmk}[prop]{Remark}
\newcommand{\dt}{\frac{\partial}{\partial t}}
\newcommand{\da}{\frac{\partial}{\partial a}}
\newcommand{\FF}{\mathcal F}
\renewcommand{\bar}[1]{\overline{#1}}
\newcommand{\del}{\partial}
\newcommand{\bi}{\bar{i}}
\newcommand{\bj}{\bar{j}}
\newcommand{\bk}{\bar{k}}
\newcommand{\bl}{\bar{l}}
\newcommand{\bp}{\bar{p}}
\newcommand{\til}[1]{\widetilde{#1}}
\newcommand{\KK}{\mathcal K}
\newcommand{\GG}{\mathcal G}
\newcommand{\LL}{\mathcal L}
\newcommand{\HH}{\mathcal H}
\DeclareMathOperator{\Ric}{Ric}
\DeclareMathOperator{\Rm}{Rm}
\DeclareMathOperator{\Ker}{Ker}
\DeclareMathOperator{\End}{End}
\begin{document}
	\title{Stability of the Almost Hermitian Curvature Flow}
	\author{Daniel J. Smith$^*$}
	\thanks{$^*$Supported in part by Research Training in Geometry and Topology grant DMS  0739208}
	
	\maketitle
	
\begin{abstract}
The Almost Hermitian Curvature flow was introduced in \cite{SCF} by Streets and Tian in order to study almost hermitian structures, with a particular interest in symplectic structures. This flow is given by a diffusion-reaction equation. Hence it is natural to ask the following: which almost hermitian structures are dynamically stable? An almost hermitian structure $(\til{\omega},\til{J})$ is dynamically stable if it is a fixed point of the flow and there exists a neighborhood $\mathcal{N}$ of $(\til{\omega},\til{J})$ such that for any almost hermitian structure $(\omega(0),J(0)) \in \mathcal{N}$ the solution of the Almost Hermitian Curvature flow starting at $(\omega(0),J(0))$ exists for all time and converges to a fixed point of the flow. We prove that on a closed K\"{a}hler-Einstein manifold $(M,\til{\omega},\til{J})$ such that either $c_1(\til{J}) <0$ or $(M,\til{\omega},\til{J})$ is a Calabi-Yau manifold, then the K\"{a}hler-Einstein structure $(\til{\omega},\til{J})$ is dynamically stable.
\end{abstract}

\section{Introduction}
Let $(M, J, g)$ be a closed almost complex manifold such that $J$ is compatible with the Riemannian metric $g$, that is for any vector fields $X$ and $Y$ we have $g(X,Y) = g(J(X),J(Y))$. To the metric $g$ we associate the 2-form $\omega$ defined by $\omega(X,Y) = g(J(X),Y)$. We call such a pair $(\omega, J)$ an \emph{almost hermitian structure}.\\ 
\indent The Ricci flow has proven to be a successful tool in studying the Riemannian geometry of manifolds. Therefore it is natural to attempt to use a parabolic flow to understand the almost hermitian geometry of almost complex manifolds. However, the Ricci flow does not, in general, preserve the set of almost hermitian structures. In \cite{SCF}, Streets and Tian introduce the Almost Hermitian Curvature flow (AHCF), which is a weakly-parabolic flow on the space of almost hermitian structures.\\
\indent AHCF generalizes K\"{a}hler Ricci flow in the sense that if the initial structure $(\omega_0,J_0)$ is K\"{a}hler, then the evolution of $(\omega(t),J(t))$ by AHCF coincides with K\"{a}hler Ricci flow. In \cite{HCF}, Streets and Tian construct a parabolic flow on the space of hermitian structures $(\omega,J)$ (here $J$ is integrable), called Hermitian Curvature flow (HCF). AHCF also generalizes HCF. In addition to HCF, Gill has also introduced a parabolic flow of hermitian structures called Chern-Ricci flow (see \cite{Gill1}, \cite{CRF1}, \cite{CRF2}).\\
\indent As we will see below AHCF is, in fact, a family of geometric flows. Streets and Tian have a particular interest in one of these flows, called Symplectic Curvature flow (SCF). Given an almost hermitian structure $(\omega_0,J_0)$ such that $d\omega_0 = 0$, under SCF $\omega(t)$ is a closed form as long as the flow exists. Therefore, SCF is a tool which can be used to study symplectic structures. Hence we see that AHCF is a very general family of geometric flows.\\
\indent The Almost Hermitian Curvature flow is a coupled flow of metrics and almost complex structures. It is written 
\begin{gather} \label{AHCF}
\begin{split}
	\dt \omega &=-2S+H+Q\\ 
	\dt J &=-\KK+\HH.
\end{split}
\end{gather}
 $S$ is a ``Ricci-type" curvature. In particular, $S_{ij}=\omega^{kl}\Omega_{klij}$ and $\Omega$ is the curvature of the almost-Chern connection $\nabla$. That is, $\nabla$ is the unique connection satisfying $\nabla \omega =0$, $\nabla J =0$ and $T^{1,1} =0$. $T^{1,1}$ is the $(1,1)$ component of the torsion of $\nabla$. $Q$ is any $(1,1)$ form that is quadratic in the torsion of $\nabla$. $\KK_j^i= \omega^{kl}\nabla_kN_{lj}^i$ where $N$ is the Nijenhuis tensor with respect to $J$. $\HH$ is any endomorphism of $TM$ that is quadratic in $N$ and skew commutes with $J$. The term $H(X,Y) \dot{=} \frac{1}{2}\big[ \omega((-\KK + \HH)(X),J(Y))+\omega(J(X),(-\KK + \HH)(Y)) \big]$ is required in order to maintain the compatibility of $\omega_t$ with $J_t$. Streets and Tian prove short-time existence and uniqueness (see Theorem 1.1 in \cite{SCF}) of the flow starting at an almost hermitian structure $(\omega(0),J(0))$. Notice that the generality with which the tensors $Q$ and $\HH$ are defined implies that (\ref{AHCF}) is in fact a family of geometric flows. This family of geometric flows includes Hermitian Curvature flow, Symplectic Curvature flow and K\"{a}hler Ricci flow. Associated to AHCF is the volume-normalized version of the flow (VNAHCF), the volume-normalized version is the one with which we will work.\\
\indent One natural question to ask is: does $M$ admit a K\"{a}hler-Einstein structure? If so, is it detected by VNAHCF? The main result of the paper is the following:

\begin{thm} \label{dynamic stability} 
Let $(M^{2n},\til{\omega},\til{J})$ be a closed complex manifold with $(\til{\omega},\til{J})$ a K\"{a}hler-Einstein structure such that either $c_1(\til{J}) < 0$ or $(M,\til{\omega},\til{J})$ is a Calabi-Yau manifold. Then there exists $\epsilon >0$ such that if $(\omega(0),J(0))$ is an almost hermitian structure with $\big|(\omega(0)-\til{\omega},J(0)-\til{J})\big|_{C^{\infty}} < \epsilon$, then the solution to the volume normalized AHCF starting at $(\omega(0),J(0))$ exists for all time and converges exponentially to a K\"{a}hler-Einstein structure $(\omega_{KE},J_{KE})$.
\end{thm}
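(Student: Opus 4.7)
The plan is the standard dynamic-stability program for a geometric flow: DeTurck the equation to restore strict parabolicity, linearize at the fixed point, control the spectrum of the linearization, and apply a stable/center manifold argument. First, because VNAHCF is only weakly parabolic (due to diffeomorphism invariance), I would introduce a DeTurck modification using $(\til{\omega}, \til{J})$ as a background: form the vector field $V$ measuring the difference between the almost-Chern connection of $(\omega(t), J(t))$ and that of the reference, and add $\LL_V\omega$ and $\LL_V J$ to the right-hand sides of (\ref{AHCF}). This produces a strictly parabolic flow, as in the short-time existence proof of \cite{SCF}, whose solutions differ from those of VNAHCF by a time-dependent family of diffeomorphisms. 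Exponential convergence of this DeTurck-modified flow to a KE structure therefore suffices.

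Next, linearize the modified flow at $(\til{\omega}, \til{J})$. Writing a deformation as $(h, K)$ with $h$ a symmetric two-tensor and $K \in \End(TM)$ anti-commuting with $\til{J}$, observe that because $Q$ and $\HH$ are quadratic in the torsion and Nijenhuis tensor (both of which vanish at a K\"ahler point) these terms contribute no linear part, and $H$ vanishes to first order as well. Combined with the DeTurck correction, the linearization $L$ becomes a second-order elliptic system on $(h, K)$ whose principal part is the Lichnerowicz Laplacian on $h$ (coming from $-2S$) together with a Hodge-type Laplacian on $K$ (coming from $-\KK$) controlling the linearized Nijenhuis tensor. Self-adjointness (or at least real, discrete spectrum) follows from K\"ahler identities at $\til{\omega}$, and the volume normalization kills the constant scaling mode.

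Third, verify that $L$ has non-positive spectrum and identify $\ker L$ with the tangent space to a smooth finite-dimensional family $\FF$ of nearby KE fixed points. In the $c_1<0$ case, Bando--Mabuchi rigidity together with the positivity of the Lichnerowicz operator at a negative Einstein metric (Koiso) forces $\ker L = 0$ modulo the gauge already eliminated, giving strict negativity of $L$ off this kernel and hence exponential contraction by a linear stability argument. In the Calabi--Yau case $\ker L$ is nontrivial but integrable: Tian--Todorov produces a smooth local moduli of Ricci-flat K\"ahler structures whose tangent space matches $\ker L$, and exponential convergence to a specific element of $\FF$ follows from a center-manifold theorem (e.g.\ Simonett's maximal regularity framework) applied to the strictly parabolic modified flow.

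The main obstacle will be Step 3 in the almost hermitian setting, where deformations include $K$ with nonzero linearized Nijenhuis tensor --- modes absent from classical K\"ahler-Ricci-flow stability analysis. One must carefully compute the variation of $\KK = \omega^{kl}\N_k N_{lj}^i$ at the K\"ahler point, where $N$ itself vanishes so only its linearization contributes, and show that the resulting operator on $K$ is a nonnegative Hodge Laplacian whose kernel consists precisely of Kodaira--Spencer classes preserving the KE condition. A secondary obstacle, in the Calabi--Yau case, is verifying that $\ker L$ coincides with (rather than merely contains) the tangent space to the KE moduli, so that the dynamics actually collapse onto $\FF$ instead of drifting along a larger null subspace.
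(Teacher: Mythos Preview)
Your outline is a plausible route, but it differs substantially from what the paper actually does, and you should be aware of the differences because some of your black boxes are precisely where the paper spends its effort.

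The paper does \emph{not} pass to a DeTurck-modified flow and then invoke Simonett-type center-manifold machinery. After using the DeTurck computation only to read off the linearization $\LL$ (equations (\ref{F11})--(\ref{G})), it rewrites $\LL$ via Weitzenb\"ock--Bochner formulas as $(-\Delta_d + \tfrac{2s}{n})$ on $\dot\omega$ and $(-\Delta_{\bar\partial} + \tfrac{2s}{n})$ on $\dot J$; this immediately gives negative semi-definiteness when $s\le 0$ and strict negativity when $c_1<0$, with no appeal to Bando--Mabuchi or Koiso positivity. From there the argument is entirely by hand: an approximating tangent vector $\psi(t)\in T_{(\til\omega,\til J)}\mathcal C$ is built to handle the nonlinearity of the space of almost hermitian structures (Lemma \ref{psi is quadratic/cubic}), a quadratic error estimate on the nonlinear remainder $A$ is proved (Lemma \ref{the flow of psi}), and then a bootstrapped energy hierarchy yields finite-time ``start close, stay close'' control (Theorem \ref{start close, stay close}). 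For $c_1<0$ the strict spectral gap plus these estimates give $L^2$ and then $C^k$ exponential decay directly (Lemmas \ref{c1 < 0 L2 decay}, \ref{L2 to Ck decay}).

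In the Calabi--Yau case the paper again avoids an abstract center-manifold theorem. It proves $\ker\LL \cong T_{(\til\omega,\til J)}\mathcal U$ via Tian--Todorov and the Calabi--Yau theorem (Lemma \ref{tangent space lemma}), then on each finite window $I_j=[jT,(j+1)T]$ uses the exponential map on $\mathcal U$ to pick a Calabi--Yau structure $(\omega_j,J_j)$ so that the kernel component of $\psi_{I_j}$ is killed at the left endpoint (Theorem \ref{new KE structure}); this yields exponential $L^2$ decay on $I_j$ (Lemma \ref{exponential decay}), and a backward-propagation lemma for the linear flow (Lemma \ref{exponential decay later implies exponential decay earlier}) extends it to $[0,(j+1)T]$. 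A telescoping estimate shows $|\psi|_{C^k}$ stays small for all time (Theorem \ref{long-time existence}), the $(\omega_j,J_j)$ stay in a fixed neighborhood, and a subsequential limit gives $(\omega_{KE},J_{KE})$.

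Your strategy would likely work, but the two obstacles you flag are exactly the places where the paper's explicit computations and ad hoc constructions replace the need to verify the hypotheses of a maximal-regularity center-manifold theorem for this coupled, somewhat nonstandard system. If you pursue your route, those verifications (particularly that the DeTurck-modified system is quasilinear parabolic in the correct function-space setting, and that $\ker L$ equals rather than merely contains $T\mathcal U$) are genuine work, not formalities.
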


\begin{rmk} Theorem \ref{dynamic stability} gives evidence that the Almost Hermitian Curvature flow reflects the underlying almost hermitian geometry of $M$. 
\end{rmk}

\begin{rmk} In this paper we define a \emph{Calabi-Yau manifold} $(M,\til{\omega},\til{J})$ to be a compact K\"{a}hler manifold with trivial canonical bundle such that $\til{\omega}$ is a K\"{a}hler-Einstein metric with $\Ric (\til{\omega}) = 0$.
\end{rmk}

\begin{rmk} In the case when $c_1(\til{J}) < 0$, the K\"{a}hler-Einstein structure that the flow starts close to is the same one that the flow converges to, in other words $(\til{\omega},\til{J}) = (\omega_{KE},J_{KE})$. This is proved in Theorem \ref{dynamic stability c1 < 0}.\\
\indent In the Calabi-Yau case we cannot guarantee that $(\til{\omega},\til{J})$ and $(\omega_{KE},J_{KE})$ are the same Calabi-Yau structure.
\end{rmk}

The notion of stability in Theorem \ref{dynamic stability} is often referred to as dynamic stability. Dynamic stability has also been studied in the case of the Hermitian Curvature flow by Streets and Tian (\cite{HCF}) and for the Ricci flow by Sesum (\cite{Sesum1}) and by Guenther, Isenberg, and Knopf (\cite{GIK}).\\
\indent The first step in proving Theorem \ref{dynamic stability} is to show that K\"{a}hler-Einstein structures behave like sinks of the linear flow associated to VNAHCF, this is done in Section \ref{linstab}. Also in Section \ref{linstab}, we derive parabolic estimates for the VNAHCF (see Theorem \ref{start close, stay close}).

Next, in Section \ref{c1<0} we prove Theorem \ref{dynamic stability} in the case when $c_1(\til{J}) < 0$. Finally, in the last section we complete the proof of Theorem \ref{dynamic stability} by showing how to find a K\"{a}hler-Einstein structure $(\omega_{KE},J_{KE})$ to which the flow exponentially converges in the Calabi-Yau case.

\begin{ack}{\rm 
The author would like to thank his advisor Jon Wolfson for all of his help and suggestions on this project.}
\end{ack}

\section{Linear Stability and Parabolic Estimates} \label{linstab}
To prove theorem \ref{dynamic stability} we first show that, on a linear level, any K\"{a}hler-Einstein structure $(\til{\omega},\til{J})$ is a ``degenerate sink" with respect to the Almost Hermitian Curvature flow, meaning that the linear operator associated to the non-linear flow is negative semi-definite at K\"{a}hler-Einstein structures. For notation sake write VNAHCF:
\begin{gather}
\begin{split} \label{flow of w,J}
	\dt \omega &= \FF  \\
	\dt J &= \GG.
\end{split}
\end{gather}
To the operator $(\FF, \GG)$, we have the associated linear operator $(\dot{\FF},\dot{\GG})$. In particular, we consider a one-parameter family of compatible, unit volume, almost hermitian structures $(\omega(a), J(a))$ and $(\dot{\FF},\dot{\GG}) \doteq \da \big|_{a=0}(\FF, \GG)(\omega(a), J(a))$. Similarly we write $(\dot{\omega},\dot{J}) \doteq \da \big|_{a=0}(\omega(a), J(a))$. 

\defn An almost-hermitian structure $(\omega, J)$ is called \emph{static} provided $(\FF(\omega,J),\GG(\omega,J)) = 0$. Moreover, a static structure $({\omega},{J})$ is \emph{linearly stable} if the linearization $\LL \doteq (\dot{\FF},\dot{\GG})_{({\omega},{J})}$ is negative semi-definite, that is $\langle \LL \cdot, \cdot \rangle_{L^2({g})} \leq 0$.\\

\noindent Notice that K\"{a}hler-Einstein structures are static under VNAHCF. Next, we prove

\thm \label{linear stability} Let $(M, \til{J})$ be a closed complex manifold with $c_1(\til{J}) \leq 0$, then any K\"{a}hler-Einstein structure $(\til{\omega},\til{J})$ on $M$ is linearly stable.

\proof Employing the DeTurck trick as in Proposition 5.4 and 5.5 of \cite{SCF} the weak-ellipticity of $(\FF, \GG)$ follows. Furthermore, computing the linearization of $(\FF, \GG)$ at a K\"{a}hler-Einstein structure, in complex coordinates with respect to $\til{J}$, the linearization is written
\begin{align}
\dot{\FF}_{i\bj} &= -2\nabla^*\nabla \dot{\omega}_{i\bj} + 2\dot{\omega}^{k\bl}R_{k \bl i \bj}\label{F11}\\
\dot{\FF}_{i j} &= -2\nabla^*\nabla \dot{\omega}_{i j}\label{F20}\\
\dot{\GG}_{\bj}^i &= -2\nabla^*\nabla \dot{J}_{\bj}^i + 2\dot{J}^{pq}R_{\bj pq}^i\label{G}.
\end{align}
Here $\nabla^*$ is the $L^2(\til{g})$ adjoint of $\nabla$ and $R$ denotes the Riemannian curvature of $\til{g}$. \\

\indent To show that $(\til{\omega},\til{J})$ is linearly stable we have to deal with the fact that in (\ref{F11}) and (\ref{G}) the lower order terms do not have a sign. To see that the linearized operator is negative semi-definite at K\"{a}hler-Einstein structures we use a couple of Weitzenb\"{o}ck-Bochner formulas (cf. \cite{Besse}).

\begin{lemma}\label{bochner F} Let $\alpha$ and $\beta$ be a (0,2) and (1,1) form respectively. If $\til{g}$ is a K\"{a}hler-Einstein metric, then we have
\begin{align}
(\Delta_d\alpha)_{\bi \bj} &= 2\nabla^*\nabla \alpha_{\bi \bj} + 2\frac{s}{n}\alpha_{\bi \bj}\label{bochner20}\\
(\Delta_d\beta)_{i\bj} &= 2\nabla^*\nabla \beta_{i\bj}-2\beta^{k\bl}R_{k\bl i \bj}+2\frac{s}{n}\beta_{i\bj}\label{bochner11}.
\end{align}
\end{lemma}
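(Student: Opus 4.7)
The plan is to invoke the classical Weitzenb\"ock-Bochner identities for forms on a K\"ahler manifold and then specialize using the Einstein condition.

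First, since $\til g$ is K\"ahler, the K\"ahler identities provide a clean relationship between the Hodge Laplacian $\Delta_d$, the Dolbeault Laplacian $\Delta_{\delb}$, and the rough Laplacian $\N^*\N$, allowing us to reduce the calculation to the Bochner-Kodaira formula for $\Delta_{\delb}$ and then pass back to $\Delta_d$ via the relation $\Delta_d = 2\Delta_{\delb}$.

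Second, recall that for a $(p,q)$-form on a K\"ahler manifold, Bochner-Kodaira expresses $\Delta_{\delb}$ as $\N^*\N$ plus a linear curvature operator; in local holomorphic coordinates this operator is written out via the commutators $[\N_i,\N_{\bj}]$ acting on each index of the form (see, e.g., \cite{Besse}). On the antiholomorphic indices the commutator collapses into a Ricci contraction, while a mixed holomorphic-antiholomorphic contraction retains a full Riemann curvature factor and cannot be reduced to a trace.

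Third, for a $(0,2)$-form $\alpha$ only Ricci contributions on the antiholomorphic indices arise; the Einstein identity $R_{i\bj} = \tfrac{s}{n}g_{i\bj}$, combined with the antisymmetry $\alpha_{\bi\bj}=-\alpha_{\bj\bi}$ where needed, collapses these into a scalar multiple of $\alpha_{\bi\bj}$, yielding (\ref{bochner20}) after applying the factor of $2$ from $\Delta_d = 2\Delta_{\delb}$. For a $(1,1)$-form $\beta$ the same procedure produces Ricci contributions on the single holomorphic and single antiholomorphic index---again reducing via Einstein to a multiple of $\beta_{i\bj}$---together with a surviving Riemann contraction of the form $R_{k\bl i\bj}\beta^{k\bl}$, giving (\ref{bochner11}).

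The main obstacle is the careful bookkeeping of signs and numerical factors arising in the transition between $\Delta_{\delb}$ and $\Delta_d$ and in the various curvature contractions; once the conventions are pinned down (in particular, that $\Delta_d = 2\Delta_{\delb}$ and that the commutator identity $[\N_i,\N_{\bj}]$ acts on each tensor slot through the Chern curvature of $\til g$), the identities reduce to a routine tensor calculation.
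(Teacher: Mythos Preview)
The paper does not actually prove this lemma; it merely states the two identities as known Weitzenb\"ock--Bochner formulas and cites \cite{Besse}. Your proposal sketches precisely the standard derivation one would find in that reference---use $\Delta_d = 2\Delta_{\delb}$ on a K\"ahler manifold, apply the Bochner--Kodaira identity for $\Delta_{\delb}$ on $(p,q)$-forms, and then collapse the Ricci contractions via the Einstein condition $R_{i\bj} = \tfrac{s}{n}g_{i\bj}$---so your approach is correct and is exactly what the paper is implicitly invoking.
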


\noindent Where $\Delta_d$ is the Hodge Laplacian with respect to $\til{g}$ and $s$ is the scalar curvature of $\til{g}$. In addition we will use another Weitzenb\"{o}ck-Bochner formula.

\begin{lemma}\label{bochner G} Given a $T^{1,0}(M,\til{\omega},\til{J})$ valued $(0,1)$-form, $\phi$ and K\"{a}hler-Einstein metric $\til{g}$ we have:
\begin{align}
({\Delta}_{\overline{\del}}\phi)_{\bj}^i=\nabla^*\nabla \phi_{\bj}^i-\phi^{pq}R_{\bj pq}^i + \frac{s}{n}\phi_{\bj}^i.\label{bochnerG}
\end{align}
Where ${\Delta}_{\overline{\del}}$ represents the complex laplacian $ \overline{\del}\overline{\del}_{\til{g}}^* +  \overline{\del}^*\overline{\del}_{\til{g}}$.
\end{lemma}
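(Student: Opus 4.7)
The plan is to derive Lemma \ref{bochner G} as the standard Bochner-Kodaira identity for $(0,1)$-forms valued in a Hermitian holomorphic vector bundle, specialized to $E = T^{1,0}M$ equipped with its Chern connection. On a K\"ahler manifold the Chern connection on $T^{1,0}M$ coincides with the Levi-Civita connection, so its curvature is the Riemann curvature of $\til{g}$. The only additional input needed to recover the two lower-order terms in the lemma is the K\"ahler-Einstein relation $R_{i\bj} = \tfrac{s}{n}g_{i\bj}$.

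Concretely, I would first work out $\Delta_{\delb}\phi = \delb\delb^*\phi + \delb^*\delb\phi$ directly in local holomorphic coordinates. Using
\[
(\delb\phi)_{\bk\bj}^i = \nabla_{\bk}\phi_{\bj}^i - \nabla_{\bj}\phi_{\bk}^i, \qquad (\delb^*\phi)^i = -g^{p\bq}\nabla_p\phi_{\bq}^i,
\]
a direct expansion combines the two compositions into
\[
(\Delta_{\delb}\phi)_{\bj}^i = -g^{p\bk}\nabla_p\nabla_{\bk}\phi_{\bj}^i + g^{p\bk}[\nabla_p,\nabla_{\bj}]\phi_{\bk}^i.
\]
Under the (complex) normalization of $\nabla^*\nabla$ used in the Bochner formulas (\ref{bochner20}) and (\ref{bochner11}), the first term on the right is exactly $\nabla^*\nabla\phi_{\bj}^i$, so the entire lower-order content of the lemma is contained in the remaining commutator term.

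The heart of the argument is then to evaluate this commutator using $[\nabla_p,\nabla_{\bj}]\tau = -R(\partial_p,\partial_{\bj})\tau$, noting that the Chern connection acts diagonally on $T^{1,0}\otimes (T^{0,1})^*$. The action on the upper $T^{1,0}$-index of $\phi_{\bk}^i$, after contraction with $g^{p\bk}$, produces a full Riemann contraction that one rearranges, via the K\"ahler symmetries of the curvature tensor, into precisely $-\phi^{pq}R_{\bj pq}^i$. The action on the lower $(0,1)$-form index likewise contracts with $g^{p\bk}$ to yield a Ricci trace on the form index of $\phi$; the K\"ahler-Einstein assumption $R_{i\bj}=\tfrac{s}{n}g_{i\bj}$ collapses this to the scalar $\tfrac{s}{n}\phi_{\bj}^i$. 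Assembling these contributions gives the claimed identity.

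The main obstacle is the bookkeeping of signs and index positions in the commutator step: one must keep careful track of how the Chern/Riemann curvature acts on the upper $T^{1,0}$ index versus the lower $(0,1)$-form index of $\phi$, and ensure the resulting contractions reproduce exactly the coefficients $-\phi^{pq}R_{\bj pq}^i$ and $\tfrac{s}{n}\phi_{\bj}^i$ displayed in (\ref{bochnerG}). The K\"ahler-Einstein hypothesis itself is used only at the very end, where it reduces the single surviving Ricci trace to a scalar.
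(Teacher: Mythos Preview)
The paper does not actually prove this lemma: it is stated without argument as a known Weitzenb\"ock--Bochner identity, in the same spirit as the companion Lemma~\ref{bochner F} (which is attributed to \cite{Besse}). Your proposal supplies exactly the standard derivation one would give---the Bochner--Kodaira identity for $E$-valued $(0,1)$-forms with $E=T^{1,0}M$, computed in local holomorphic coordinates via the commutator $g^{p\bk}[\nabla_p,\nabla_{\bj}]\phi_{\bk}^i$, followed by the K\"ahler--Einstein reduction of the Ricci trace---and the outline is correct. Your identification of $-g^{p\bk}\nabla_p\nabla_{\bk}$ with the $\nabla^*\nabla$ appearing in (\ref{bochnerG}) is consistent with the normalization visible in (\ref{bochner20})--(\ref{bochner11}), where the factor of $2$ reflects $\Delta_d=2\Delta_{\delb}$ on a K\"ahler manifold. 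So there is nothing to compare against; your sketch simply fills in what the paper takes for granted.
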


\noindent Therefore using equations (\ref{F11}), (\ref{F20}), (\ref{bochner20}) and (\ref{bochner11}), we have that
\begin{align}
	\dot{\FF} = -\Delta_d\dot{\omega}+2\frac{s}{n}\dot{\omega}.\label{linearization of F}
\end{align}
Similarly, using equations (\ref{G}) and (\ref{bochnerG}), we have
\begin{align}
	\dot{\GG} = -\Delta_{\overline{\del}}\dot{J} + 2 \frac{s}{n}\dot{J}.\label{linearization of G}
\end{align}
Combining (\ref{linearization of F}) and (\ref{linearization of G}) we see that
\begin{align}
	\LL (\dot{\omega},\dot{J}) = \left(-\Delta_d\dot{\omega}+2\frac{s}{n}\dot{\omega}, -\Delta_{\overline{\del}}\dot{J} + 2 		\frac{s}{n}\dot{J}\right). \label{LL neg def}
\end{align} 
Finally since $c_1(\til{J}) \leq 0$ implies that $s \leq 0$; by integrating the theorem follows.
\qed

\medskip
Notice that if $c_1(\til{J}) < 0$, then the scalar curvature of $\til{g}$ is negative; that is $s < 0$. Hence from (\ref{LL neg def}) it follows that if $c_1(\til{J}) < 0$, then $\LL$ is strictly negative definite with respect to $L^2(\til{g})$. Let $\lambda = \min \{ |\lambda_i| : \text{$\lambda_i$ is an eigenvalue of $\LL$} \}$. Further let $\mathcal{C}$ denote the space of almost hermitian structures modulo diffeomorphism. Therefore we have proved the following corollary.

\begin{cor}\label{c1<0 implies LL<0}
Let $(M,\til{\omega},\til{J})$ be a closed complex manifold such that $(\til{\omega},\til{J})$ is a K\"{a}hler-Einstein structure and moreover $c_1(\til{J}) < 0$. Let $\psi \in T_{(\til{\omega},\til{J})}\mathcal{C}$, then
$$\big\langle \LL_{(\til{\omega},\til{J})} \psi, \psi \big\rangle_{L^2(\til{g})} \leq -\lambda |\psi|_{L^2(\til{g})}^2.$$
\end{cor}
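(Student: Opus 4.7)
The plan is to read off the corollary directly from formula (\ref{LL neg def}) for $\LL$ derived in the proof of Theorem \ref{linear stability}, combined with the elementary observation that the hypothesis $c_1(\til J) < 0$ forces the scalar curvature $s$ of $\til g$ to be strictly negative. Indeed, the K\"ahler--Einstein condition gives $\Ric(\til g) = (s/n)\,\til g$, and since $c_1(\til J)$ is represented (up to a positive factor) by $\Ric(\til g)$, negativity of $c_1(\til J)$ forces $s < 0$; this is the strict version of the sign implication used at the end of the proof of Theorem \ref{linear stability}.

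Given this, for any $\psi = (\dot\omega,\dot J) \in T_{(\til\omega,\til J)}\mathcal{C}$, I would pair $\LL\psi$ with $\psi$ in $L^2(\til g)$ using (\ref{LL neg def}) to obtain
\begin{align*}
\langle \LL\psi,\psi\rangle_{L^2(\til g)}
= -\langle \Delta_d\dot\omega,\dot\omega\rangle_{L^2(\til g)} - \langle \Delta_{\overline{\del}}\dot J,\dot J\rangle_{L^2(\til g)} + \frac{2s}{n}\,|\psi|_{L^2(\til g)}^2.
\end{align*}
Both the Hodge Laplacian $\Delta_d$ and the complex Laplacian $\Delta_{\overline{\del}}$ are non-negative self-adjoint operators on the closed K\"ahler manifold $(M,\til g)$, so the first two inner products are $\geq 0$ and their negatives contribute non-positively. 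Since $s < 0$, this already yields the quadratic-form bound
\begin{align*}
\langle \LL\psi,\psi\rangle_{L^2(\til g)} \;\leq\; \frac{2s}{n}\,|\psi|_{L^2(\til g)}^2 \;<\; 0.
\end{align*}

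To upgrade this to the eigenvalue bound involving $\lambda$, I would invoke the ellipticity of $\LL$ supplied by the DeTurck modification invoked in the proof of Theorem \ref{linear stability}. On the closed manifold $M$, this realizes $\LL$ as a self-adjoint elliptic operator on $T_{(\til\omega,\til J)}\mathcal{C}$, so its spectrum is discrete. The quadratic-form estimate above shows every eigenvalue $\lambda_i$ of $\LL$ satisfies $\lambda_i \leq 2s/n < 0$, and hence $\lambda = \min_i|\lambda_i| \geq |2s/n| > 0$; the stated inequality then follows from the spectral theorem applied to $\LL$ on $T_{(\til\omega,\til J)}\mathcal{C}$. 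The one point that requires care, rather than any genuinely hard estimate, is the spectral-theoretic step: verifying that after passing to the quotient $\mathcal{C}$ the DeTurck-gauged $\LL$ is indeed a bona fide self-adjoint elliptic operator on a compact manifold, and this is exactly what was already established in the proof of Theorem \ref{linear stability}.
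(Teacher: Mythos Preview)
Your proof is correct and follows essentially the same approach as the paper: observe that $c_1(\til J)<0$ forces $s<0$, plug this into formula (\ref{LL neg def}) to see $\LL$ is strictly negative as a quadratic form, and then read off the eigenvalue bound from the discrete spectrum of the elliptic operator $\LL$ on the closed manifold. The paper's argument is simply the paragraph preceding the corollary statement and is quite terse; you have filled in the details (non-negativity of $\Delta_d$ and $\Delta_{\overline\del}$, the explicit bound $\lambda_i\le 2s/n$, and the spectral-theoretic justification) that the paper leaves implicit.
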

Corollary \ref{c1<0 implies LL<0} will be crucial to proving Theorem \ref{dynamic stability} in the case when $c_1(\til{J}) < 0$ (see Section \ref{c1<0}).\\
\indent Fix a K\"{a}hler-Einstein structure $(\til{\omega},\til{J})$ and let $(\omega(t),J(t))$ be a solution of the coupled system (\ref{flow of w,J}) starting at an initial almost hermitian structure $(\omega(0),J(0))$. We will quantify the amount by which the solution deviates from $(\til{\omega},\til{J})$ using 
\begin{align*}
	\rho(t) = (\omega(t)-\til{\omega},J(t)-\til{J}).
\end{align*} 
Notice that $\rho(t) \in \Lambda^2(M) \times \End (TM)$. Throughout the paper we use the operator norm on $\End (TM)$.\\
\indent As noted in the proof of Theorem 1.1 in \cite{SCF}, $\mathcal{C}$ is a non-linear manifold. In the following lemma we will see that $\rho(t) \notin T_{(\til{\omega},\til{J})}\mathcal{C}$, however $\rho(t)$ can be estimated by an element of the tangent space $T_{(\til{\omega},\til{J})}\mathcal{C}$.

\medskip
\lemma \label{psi is quadratic/cubic} Fix $t$ and let $(\omega(t),J(t))$ be an almost hermitian structure. Write $\omega(t) = \til{\omega} + h(t)$ and $J(t) = \til{J} + K(t)$, in other words $\rho(t) = (h(t),K(t))$. If $|\rho(t)|_{C^0} < 1$, then there exists $\psi(t) \in T_{(\til{\omega},\til{J})}\mathcal{C}$ so that
\begin{align}
	|\psi(t)|_{C^k} \leq |\rho(t)|_{C^k}, \label{psi leq rho}
\end{align}
\begin{align}
	|\rho(t)|_{L^2} \leq |\psi(t)|_{L^2} + C_1|\psi(t)|_{L^2}^2 \label{rho leq psi L2}
\end{align}
and
\begin{align}
	|\rho(t)|_{C^k} \leq |\psi(t)|_{C^k} + C_2|\psi(t)|_{C^k}^2 \label{rho leq psi}
\end{align}
where $C_1$ and $C_2$ depend on the $L^2$ and $C^k$ norms of $\rho(t)$ respectively.

\proof We will begin by studying the tangent space $T_{(\til{\omega},\til{J})}\mathcal{C}$. Let $(\omega_s,J_s)$ denote a path of almost hermitian structures such that $(\omega_s,J_s)|_{s=0}=(\til{\omega},\til{J})$ and let $\frac{\partial}{\partial s} \big|_{s=0}(\omega(s), J(s)) \doteq (\dot{\omega},\dot{J})$. Given vector fields $X$ and $Y$, the compatibility condition is written:
\begin{align*}
	\omega_s(X,Y) = \omega_s(J_s(X),J_s(Y))
\end{align*}
and the almost complex condition is written:
\begin{align*}
	J^2_s(X) = -X.
\end{align*}
Hence the linearized compatibility and almost complex conditions are given by:
\begin{align}
	\dot{\omega}(X,Y) &= \dot{\omega}(\til{J}(X),\til{J}(Y))+\til{\omega}(\dot{J}(X),\til{J}(Y))+\til{\omega}(\til{J}(X),\dot{J}(Y))	\label{linearized compatibility} \\
	0&=\dot{J} \circ \til{J}(X) +\til{J} \circ \dot{J}(X). \label{linearized almost complex}
\end{align}
From (\ref{linearized almost complex}) we see that the tangent space to the space of almost complex structures is given by endomorphisms that skew-commute with $\til{J}$. Equivalently, $\dot{J}$ can be viewed as a section of $\left[\Lambda^{0,1} \otimes T^{1,0}\right] \oplus \left[\Lambda^{1,0} \otimes T^{0,1}\right]$. Here we use $\til{J}$ to decompose $TM = T^{1,0}M \oplus T^{0,1}M$.\\
\indent First we will prove that the endomorphism $K(t)$ can be estimated by an element of the tangent space to the space of almost complex structures at $\til{J}$. For the sake of notation we will often write $K(t) = K$.\\ 
\indent Using $\til{J}$ we decompose $K = K_{0,1}^{1,0} + K_{0,1}^{0,1} + K_{1,0}^{1,0} + K_{1,0}^{0,1}$ where $K_{0,1}^{1,0} : T^{0,1} \rightarrow T^{0,1}$, equivalently
\begin{align*}
	K_{0,1}^{1,0} \in \Lambda^{0,1} \otimes T^{1,0}. 
\end{align*}

\indent Take $\psi(t) \in T_{(\til{\omega},\til{J})}\mathcal{C}$ and write $\psi(t) = (\psi_1(t),\psi_2(t)) \in \Lambda^2(M) \times \End (TM)$. We define 
\begin{align}
	\psi_2(t) \doteq K_{0,1}^{1,0} + K_{1,0}^{0,1}. \label{psi_2}
\end{align}
That is, $\psi_2(t)$ is defined to be the projection of $K$ onto $\left[\Lambda^{0,1} \otimes T^{1,0}\right] \oplus \left[\Lambda^{1,0} \otimes T^{0,1}\right]$. Next we will show that $K_{0,1}^{0,1}$ and $K_{1,0}^{1,0}$ are quadratic in $\psi_2(t)$. We will only prove this for $K_{0,1}^{0,1}$ since the same argument applies to $K_{1,0}^{1,0}$.\\
\indent Using that $J(t)$ is an almost complex structure we see that $K(t)$ satisfies:
\begin{align}
	0&=K \circ \til{J}(X) +\til{J} \circ K(X) + K^2(X). \label{K almost complex}
\end{align}
Now for $K$ acting on $T^{0,1}$ we will write $K = K_{0,1}^{0,1} + K_{0,1}^{1,0}$. Therefore using (\ref{K almost complex}), on $T^{0,1}$ we have
\begin{align*}
	0 &= -2\sqrt{-1}K_{0,1}^{0,1} + K_{0,1}^{0,1} \circ K_{0,1}^{0,1} + K_{0,1}^{1,0} \circ K_{0,1}^{0,1} + K_{1,0}^{1,0} \circ 	K_{0,1}^{1,0} + K_{1,0}^{0,1} \circ K_{0,1}^{1,0}
\end{align*}
and so by type consideration,
\begin{align}
	K_{0,1}^{0,1} = -\frac{\sqrt{-1}}{2}\left(K_{0,1}^{0,1} \circ K_{0,1}^{0,1} + K_{1,0}^{0,1} \circ K_{0,1}^{1,0}\right). 	\label{K0101}
\end{align}
Notice that on $T^{0,1}$, $K_{1,0}^{0,1} \circ K_{0,1}^{1,0} = \psi_2(t)^2$. Hence we are able to write $K_{0,1}^{0,1}$ in terms of $\left(K_{0,1}^{0,1}\right)^2$ and a term that is quadratic in $\psi_2(t)$.\\
\indent Next, consider the first term on the right-hand side of (\ref{K0101}), $\left(K_{0,1}^{0,1}\right)^2$. We will use (\ref{K0101}) to show that $\left(K_{0,1}^{0,1}\right)^2$ can be expressed as $\left(K_{0,1}^{0,1}\right)^4$ plus terms which are quadratic in $\psi_2(t)$. Plugging (\ref{K0101}) into each factor of $\left(K_{0,1}^{0,1}\right)^2$, we see that
\begin{align*}
	\left(K_{0,1}^{0,1}\right)^2 = -\frac{1}{4}\left[\left(K_{0,1}^{0,1}\right)^4 + \left(K_{0,1}^{0,1}\right)^2 \circ \psi_2^2 + 		\psi_2^2 \circ \left(K_{0,1}^{0,1}\right) + \psi_2^4 \right],
\end{align*}
which can be substituted into the term $K_{0,1}^{0,1} \circ K_{0,1}^{0,1}$ in (\ref{K0101}). Iterating this process by successively plugging (\ref{K0101}) into the highest power term in $K_{0,1}^{0,1}$, we see that $K_{0,1}^{0,1}$ can be expressed as a series. Notice that since $|\rho|_{C^0} < 1$ it follows that $\big|K_{0,1}^{0,1}\big|_{C^0} < 1$, and so this series converges. Therefore
\begin{align} 
	K_{0,1}^{0,1} = \psi_2(t)^2 +\text{ $[$higher-power terms in $\psi_2$ $\circ$ higher-power terms in $K]$.} \label{K0101 higher order in psi}
\end{align}
\indent Next we will show that the two form $h(t)$ can be estimated by an element of the tangent space to the space of compatible metrics. Notice that for vector fields $X$ and $Y$,
\begin{align*}
	\dot{\omega}(X,Y) - \dot{\omega}(\til{J}(X),\til{J}(Y)) = 2\dot{\omega}^{(2,0)+(0,2)}(X,Y),
\end{align*}
and so by (\ref{linearized compatibility})
\begin{align*}
	2\dot{\omega}^{(2,0)+(0,2)}(X,Y) = \til{\omega}(\dot{J}(X),\til{J}(Y))+\til{\omega}(\til{J}(X),\dot{J}(Y)).
\end{align*} 
Using the compatibility of $\omega(t)$ and $J(t)$, we see that $h(t) = \omega(t)-\til{\omega}$ and $K(t) = J(t)-\til{J}$ satisfy:
\begin{gather} \label{h compatibility}
\begin{split}
	h(X,Y) &= h(\til{J}(X),\til{J}(Y))+\til{\omega}(K(X),\til{J}(Y))+\til{\omega}(\til{J}(X),K(Y)) \\
	&\hspace{.152 in}+\til{\omega}(K(X),K(Y))+h(K(X),\til{J}(Y))+h(\til{J}(X),K(Y)) \\
	&\hspace{.152 in}+h(K(X),K(Y)).
\end{split}
\end{gather}
We define $\psi_1(t)$ as follows
\begin{align}
	\psi_1^{(1,1)} &\doteq h^{(1,1)} \label{psi111} \\
	\psi_1^{(2,0)+(0,2)}(X,Y) &\doteq \til{\omega}(K(X),\til{J}(Y)) + \til{\omega}(\til{J}(X),K(Y)) \label{psi20021} \\
	 &= \til{\omega}(\psi_2(X),\til{J}(Y)) + \til{\omega}(\til{J}(X),\psi_2(Y)). \label{psi20022}
\end{align}
The last equality follows from the definition of $\psi_2$ and the fact that $\til{\omega}$ is of type $(1,1)$.\\
\indent Next we will show that $h^{(2,0)+(0,2)} - \psi_1^{(2,0)+(0,2)}$ can be expressed as terms that are quadratic in $\psi(t)$. Combining (\ref{h compatibility}), (\ref{psi20021}) and (\ref{psi20022}) we have
\begin{gather} \label {higher-order K terms}
\begin{split}
	2\left(h^{(2,0)+(0,2)}(X,Y) - \psi_1^{(2,0)+(0,2)}(X,Y)\right) &= h(K(X),\til{J}(Y))+h(\til{J}(X),K(Y)) \\
	&\hspace{.152 in}\til{\omega}(K(X),K(Y)) +h(K(X),K(Y)).
\end{split}
\end{gather}
\indent As we proved above in (\ref{psi_2}) and (\ref{K0101 higher order in psi}), $K$ can be written in terms of $\psi_2$ and hence the terms in the second line of (\ref{higher-order K terms}) are higher-power in $\psi_2$. Next we consider the term $h(K(X),\til{J}(Y))$. Since the left-hand side of (\ref{higher-order K terms}) is a section of $\Lambda^{(2,0) + (0,2)}$, let $X, Y \in T^{0,1}M$. So for $X, Y \in T^{0,1}M$ we can write the components of $h(K(X),\til{J}(Y))$ as
\begin{align}
	K_{0,1}^{0,1}h^{(0,2)} + K_{0,1}^{1,0}h^{(1,1)}. \label{K0110h02}
\end{align}
\indent From (\ref{psi_2}) and (\ref{psi111}) we see that the second term in (\ref{K0110h02}) is quadratic in $\psi$. By (\ref{K0101 higher order in psi}) the first term is quadratic in $\psi$ plus higher-power terms in $\psi$ composed with higher-power terms in $\rho$. Notice that the same argument can be applied to $h(\til{J}(X),K(Y))$. Abusing notation we let $\psi * \psi$ denote terms which are quadratic in $\psi$ plus terms that are higher-power in $\psi$ composed with terms that are higher-power in $\rho$. Therefore we have
\begin{align}
	h^{(2,0)+(0,2)}(X,Y) - \psi_1^{(2,0)+(0,2)}(X,Y) &= \psi * \psi. \label{psi1 higher power}
\end{align}
\indent Notice that by the definition of $\psi(t)$, given in (\ref{psi_2}) (\ref{psi111}) and (\ref{psi20021}), the inequality
\begin{align*}
	|\psi(t)|_{C^k} \leq |\rho(t)|_{C^k}
\end{align*}
follows immediately. Again using the definition of $\psi(t)$ along with (\ref{K0101 higher order in psi}) and (\ref{psi1 higher power}) we see that
\begin{align*}
	|\rho(t)|_{C^k} \leq |\psi(t)|_{C^k} + C|\psi(t)|_{C^k}^2
\end{align*}
where $C$ depends on the $C^k$ norm of $\rho(t)$. Notice that (\ref{rho leq psi L2}) follows analogously.
\qed

\medskip
In Theorem \ref{linear stability} we proved that the linearization of $(\FF, \GG)$, denoted $\LL$, is negative semi-definite on $T_{(\til{\omega},\til{J})}\mathcal{C}$. The goal is to use the sign on $\LL$ to prove exponential decay of $\rho(t)$. However as we observed in the previous lemma, $\rho(t) \notin T_{(\til{\omega},\til{J})}\mathcal{C}$. To deal with this we will prove exponential decay of $\psi(t) \in T_{(\til{\omega},\til{J})}\mathcal{C}$ which, by (\ref{rho leq psi}), will prove exponential decay of $\rho(t)$.\\
\indent Next we show that $\psi(t)$ evolves by a parabolic flow equation and moreover that we have estimates on the non-linear part of the flow.
  
\lemma \label{the flow of psi} Let $\LL$ be the differential operator defined by (\ref{LL neg def}). Then for $\psi(t)  \in T_{(\til{\omega},\til{J})}\mathcal{C}$ defined by (\ref{psi_2}) (\ref{psi111}) and (\ref{psi20022}) we have
\begin{enumerate}
	\item $\dt \psi(t) = \LL (\psi(t)) + A((\til{\omega},\til{J}),\psi(t))$
	\item $|A((\til{\omega},\til{J}),\psi(t))|_{C^k} \leq C(|\psi(t)|_{C^k}|\nabla^2\psi(t)|_{C^k} + |\nabla \psi(t)|_{C^k}^2)$
\end{enumerate}
where $C$ depends on the $C^k$ norm of $\rho(t)$.
\proof 

To prove $\dt \psi(t) = \LL (\psi(t)) + A((\til{\omega},\til{J}),\psi(t))$ we first study the evolution of $\rho(t)$. Notice that since $(\til{\omega},\til{J})$ is independent of $t$,
\begin{align} 
	\dt \rho(t)=\dt (\omega(t),J(t))=\big( \FF (\omega(t),J(t)),\GG ( \omega(t),J(t))\big).\label{evolution rho}
 \end{align}
Furthermore since $(\til{\omega},\til{J})$ is a static structure, when we linearize $(\FF, \GG)$ at $(\til{\omega},\til{J})$ in the direction $\psi(t)$, we have
\begin{align} 
	(\FF, \GG)=\LL(\psi(t))+A((\til{\omega},\til{J}),\rho(t)).\label{linearization 2}
\end{align}
Hence from (\ref{evolution rho}) and (\ref{linearization 2}) it follows that
\begin{align} 
	\dt \rho(t)=\LL(\psi(t))+A((\til{\omega},\til{J}),\rho(t)), \label{linearization 3}
\end{align}
where $A$ represents the error in approximating $(\FF, \GG)$ by the linearization $\LL$. As in \cite{HCF} and \cite{Sesum1} we have the following $C^k$ bounds on $A$:
\begin{align*}
	|A|_{C^k}\leq C(|\rho|_{C^k}|\nabla^2\rho|_{C^k}+|\nabla \rho|_{C^k}^2). 
\end{align*}
Therefore by Lemma \ref{psi is quadratic/cubic} we have the following bounds on $A$:
\begin{align}
	|A|_{C^k}\leq C(|\psi|_{C^k}|\nabla^2\psi|_{C^k}+|\nabla \psi|_{C^k}^2). \label{A bounds 3}
\end{align}
\indent Next we will use the definition of $\psi(t)$ and the evolution of $\rho(t)$ to derive an evolution equation for $\psi(t)$. By the definition of $\psi_2(t)$ and the $(1,1)$ part of $\psi_1(t)$ (see (\ref{psi_2}) and (\ref{psi111}) respectively) we have
\begin{align}
	\dt \psi_2(t) &= \dt \left(K_{0,1}^{1,0} + K_{1,0}^{0,1} \right) \label{evolution psi_2}\\
	\dt \psi_1^{(1,1)}(t) &= \dt h^{(1,1)}(t). \label{evolution psi_111}
\end{align}
For $\dt \psi_1^{(2,0) + (0,2)}(t)$, it follows from (\ref{higher-order K terms}) that
\begin{align}
	 \dt \psi_1^{(2,0) + (0,2)}(t) &= \dt h^{(2,0) + (0,2)}(t) + \left(\dt \rho \right) * \rho, \label{evolution psi_12002}
\end{align}
since the $(2,0) + (0,2)$ components of $\psi_1(t)$ and $h(t)$ differ by terms that are quadratic in $\rho(t) = (h(t),K(t))$. Notice that by (\ref{linearization 3}) and (\ref{A bounds 3}) we have $\dt \rho$ is second order in $\psi$ and hence the final term in (\ref{evolution psi_12002}) may be absorbed in the error estimate $A$. Therefore from (\ref{evolution psi_2}), (\ref{evolution psi_111}), (\ref{evolution psi_12002}) and (\ref{linearization 3}) it follows that
\begin{align*}
	\dt \psi(t) = \LL(\psi(t))+A((\til{\omega},\til{J}),\psi(t)), 
\end{align*}
where $A$ is a different tensor than in (\ref{linearization 3}), but we still have $|A|_{C^k}\leq C(|\psi|_{C^k}|\nabla^2\psi|_{C^k}+|\nabla \psi|_{C^k}^2)$.
\qed

\medskip
Roughly speaking, the following theorem says that given any finite time $T>0$, by starting the flow very close to $(\til{\omega},\til{J})$, the solution $(\omega(t),J(t))$ remains close to $(\til{\omega},\til{J})$ on the interval $[0,T)$.   
\begin{thm} \label{start close, stay close} Given $T>0$, $\epsilon' > 0$ and an integer $k \geq 0$, there exists $\epsilon = \epsilon(T,\epsilon',k) > 0$ such that if $|\rho(0)|_{C^{\infty}}<\epsilon$ then, $(\omega(t),J(t))$ exists on $[0,T)$ and moreover $|\rho(t)|_{C^k}<\epsilon'$ on $[0,T)$.
\end{thm}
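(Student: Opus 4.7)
The plan is to run a standard continuity/bootstrap argument for the parabolic evolution $\partial_t \psi = \LL\psi + A$ of Lemma \ref{the flow of psi}, and then transfer the resulting bound from $\psi$ back to $\rho$ via Lemma \ref{psi is quadratic/cubic}. I begin by invoking the short-time existence result of \cite{SCF}, proved via the DeTurck trick, which produces a smooth solution $(\omega(t), J(t))$ on a maximal interval $[0, T_{\max})$ depending continuously on the initial data; the goal is to show $T_{\max} \geq T$ together with the claimed $C^k$ bound.

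Fix an integer $k' \geq k+2$ large enough that Sobolev embedding turns an $H^{k'+2}$ bound into a $C^k$ bound, and define
$$t^{*} = \sup\bigl\{ t \in [0, \min(T, T_{\max})) : |\psi(s)|_{C^{k'+2}} \leq 1 \text{ on } [0,t] \bigr\}.$$
On $[0, t^{*})$ the prefactor $C$ from item (2) of Lemma \ref{the flow of psi} is uniformly controlled, so for each $0 \leq j \leq k'+2$ the Sobolev energies
$$E_j(t) = \int_M |\nabla^j \psi|^2 \, dV_{\til{g}}$$
satisfy a differential inequality $\frac{d}{dt} E_j \leq C \sum_{i \leq j} E_i$. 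Here the weakly elliptic, negative semi-definite principal part of $\LL$ yields a good $-c|\nabla^{j+1}\psi|^2$ term after integration by parts, and the nonlinear $A$-contribution is absorbed using the uniform $C^{k'+2}$ bound together with interpolation. Gronwall then gives $E_j(t) \leq e^{CT} E_j(0)$ on $[0, t^{*})$, and Sobolev embedding upgrades this to $|\psi(t)|_{C^{k'+2}} \leq C(T, k')\,|\rho(0)|_{C^{k'+2+n}}$ for some fixed $n$ depending on $\dim M$.

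Choosing $\epsilon$ small enough forces $|\psi(t)|_{C^{k'+2}} < 1/2$ on $[0, t^{*})$, which contradicts the definition of $t^{*}$ unless $t^{*} = \min(T, T_{\max})$; the uniform $C^{k'+2}$ bound simultaneously rules out finite-time blow-up, so $T_{\max} \geq T$. Shrinking $\epsilon$ further enforces $|\psi(t)|_{C^k} < \epsilon'/2$, and inequality (\ref{rho leq psi}) of Lemma \ref{psi is quadratic/cubic} then yields $|\rho(t)|_{C^k} < \epsilon'$ on $[0, T)$. The main obstacle I anticipate is the weak parabolicity of $(\FF, \GG)$: the energy estimate only closes once one has reduced to a DeTurck gauge, and one must verify that this gauge transformation is trivial at $(\til{\omega}, \til{J})$ and depends smoothly on the perturbation, so that $C^{\infty}$-smallness of $\rho(0)$ in the original frame is preserved after gauging.
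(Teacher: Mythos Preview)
Your approach is essentially the same as the paper's: set up a continuity argument, derive successive $L^{j,2}$ energy inequalities for $\psi$ from the evolution $\partial_t\psi=\LL\psi+A$ (with principal part $-\nabla^*\nabla$ producing the good $-|\nabla^{j+1}\psi|^2$ term and $A$ absorbed using the a priori $C^{k'+2}$ bound), apply Gronwall, then Sobolev, then transfer back to $\rho$ via (\ref{rho leq psi}); the paper cites Theorem 1.9 of \cite{SCF} for the blow-up criterion you invoke to push past the maximal time. Your concern about the DeTurck gauge is unnecessary here: that work was already absorbed into the derivation of (\ref{F11})--(\ref{G}) and Lemma \ref{the flow of psi}, so on $T_{(\til\omega,\til J)}\mathcal C$ the operator $\LL$ has genuinely elliptic principal part $-\nabla^*\nabla$ and the energy estimates close without any further gauge fixing.
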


\begin{proof} First, for $\epsilon$ sufficiently small, work of Streets and Tian (see Theorem 1.1 in \cite{SCF}) shows that there exists $T' > 0$ such that the solution $(\omega(t),J(t))$ exists on $[0,T')$ and moreover $|\rho(t)|_{C^k} < \epsilon'$ on $[0,T')$. Suppose by way of contradiction that there exists a maximal $T''$ so that for all $\epsilon>0$ the solution exists and $|\rho(t)|_{C^k}<\epsilon'$ on $[0,T'')$ with $T''<T$. Fix $\til{T}<T''$. To derive a contradiction we will produce bounds on the $C^k$ norm of $\rho(t)$ on $[0,\til{T}]$ in terms of $\epsilon$, independent of $\til{T}$.\\
\indent Recall from Lemma \ref{psi is quadratic/cubic} that associated to $\rho(t)$ we have $\psi(t) \in T_{(\til{\omega},\til{J})}\mathcal{C}$. In order to obtain $C^k$ estimates on $\rho(t)$ in terms of $\epsilon$ we will produce $C^k$ bounds on $\psi(t)$ and employ (\ref{rho leq psi}). To this end, we study the evolution of $\psi(t)$. Recall from Lemma \ref{the flow of psi} part $(1)$ that
\begin{align} 
	\dt \psi(t)=\LL(\psi(t))+A((\til{\omega},\til{J}),\psi(t)) \label{linearization}
 \end{align}
 where $\LL$ is negative semi-definite and $A$ represents the error in approximating $(\FF, \GG)$ by $\LL$. From part $(2)$ of Lemma \ref{the flow of psi} we have
\begin{align}
	|A|_{C^k}\leq C(|\psi|_{C^k}|\nabla^2\psi|_{C^k}+|\nabla \psi|_{C^k}^2).\label{A bounds}
\end{align}
Notice that C depends on the $C^k$ norm of $\rho(t)$ which we are assuming is bounded by $\epsilon'$ for $t \in [0,T'') \supset [0,\til{T}]$.\\
\indent In the estimates that follow $\Rm$ will denote the curvature of the fixed metric $\til{g}$ and $\nabla$ will denote the Levi-Civita connection of $\til{g}$. Moreover we will use the fact that $M$ is compact and hence there exists a constant $C$ such that $|\Rm|_{C^{\infty}}<C$.

\subsection{$L^2$ bounds of $\psi$ in terms of $\epsilon$}
The linear stability of K\"{a}hler-Einstein structures will allow us to produce $L^2$ bounds on $\psi(t)$ in terms of $\epsilon$ which are independent of $\til{T}$. Indeed, for $t \in [0,\til{T}]$ by (\ref{linearization}) and using that $\langle \LL_{(\til{\omega},\til{J})} \cdot, \cdot \rangle_{L^2(\til{g})} \leq 0$, we have
\begin{align}
	\frac{1}{2}\dt \int_M|\psi|_{\til{g}}^2dvol_{\til{g}} = \int_M\bigg\langle \dt \psi, \psi \bigg\rangle \leq \int_MA*\psi.\label{2.14}
\end{align}
\indent Now, using the bound on $A$ given in (\ref{A bounds}), we see that 
\begin{align*}
	\int A*\psi = \int \psi^{*2}*\nabla^2 \psi + \nabla \psi^{*2}*\psi. 
\end{align*}
Using integration by parts on the second term yields 
\begin{align}
	\int_M A*\psi \leq  \int_M \psi^{*2}*\nabla^2 \psi. \label{2.15}  
\end{align}
For $t \in [0,\til{T}]$, by assumption and (\ref{psi leq rho}), $|\psi(t)|_{C^k} < \epsilon'$ therefore $\int_M \psi^{*2}*\nabla^2 \psi \leq C\epsilon' \int_M |\psi|^2$. Hence combining (\ref{2.14}) and (\ref{2.15}) we have
\begin{align}
	\frac{1}{2}\dt \int_M|\psi|_{\til{g}}^2dvol_{\til{g}}  \leq C_1\epsilon' \int_M|\psi|_{\til{g}}^2dvol_{\til{g}}.\label{L2 A bounds}
\end{align} 
Therefore for any $t \in [0,\til{T}]$,
\begin{align}
	|\psi(t)|_{L^2}^2 \leq e^{C_1\epsilon' T}|\psi_0|_{L^2}^2 \leq \epsilon e^{C_1\epsilon' T}.\label{L2}
\end{align}

\subsection{$L^{1,2}$ bounds of $\psi$ in terms of $\epsilon$}
Given the $L^2$ bounds above, we bootstrap to obtain higher-order bounds. Notice that linear stability was only used to start the bootstrapping process. Using (\ref{F11}), (\ref{F20}), (\ref{G}) and (\ref{linearization}) we have
\begin{align}
	\frac{1}{2}\dt \int_M|\psi|_{\til{g}}^2dvol_{\til{g}}&=\int_M\bigg\langle \dt \psi, \psi \bigg\rangle =\int_M\big\langle -\nabla^*\nabla \psi + \Rm*\psi + A, \psi \big\rangle.\label{L12 estimate}
\end{align}
\indent Since $M$ is compact there exists a constant $C_2$ such that $|\Rm(\til{g})|_{C^{\infty}} < C_2$. Moreover, we can bound the term associated with $A$ as we did in (\ref{2.15}) and (\ref{L2 A bounds}) to get
\begin{align}
	\frac{1}{2}\dt \int_M|\psi|_{\til{g}}^2dvol_{\til{g}} \leq -\int|\nabla \psi|^2dvol_{\til{g}}+C_3\int|\psi|^2dvol_{\til{g}}.\label{L2 evolution and estimate}
\end{align}

\noindent Integrating from $0$ to $\til{T}$, we see that
\begin{align*}
	\int_0^{\til{T}}\int_M|\nabla \psi|^2+\frac{1}{2}\int_M|\psi(\til{T})|^2 \leq \frac{1}{2}\int_M |\psi_0|^2+C_3\int_0^{\til{T}}		\int_M|\psi|^2. 
\end{align*}
Now the $L^2$ bounds from (\ref{L2}) imply
\begin{align}
	\int_0^{\til{T}}\int_M|\nabla \psi|^2 \leq C_4Te^{C_1\epsilon' T}|\psi_0|_{L^2}^2 \leq C_4Te^{C_1\epsilon' T}\epsilon.\label{L12}
\end{align}

\subsection{$L^{2,2}$ bounds of $\psi$ in terms of $\epsilon$}
Next, we use the $L^{1,2}$ bounds above to produce $L^{2,2}$ bounds. Similar to (\ref{L12 estimate}),
\begin{align}
	\frac{1}{2}\dt \int_M|\nabla \psi|_{\til{g}}^2dvol_{\til{g}}&=\int_M\big\langle \nabla(  -\nabla^*\nabla \psi + \Rm*\psi + A),\nabla \psi \big\rangle. \label{L12 evolution}
\end{align}
\indent First consider the term $\int \langle \nabla(  -\nabla^*\nabla \psi),\nabla \psi \rangle$ above. Commuting covariant derivatives and using integration by parts we get
\begin{align}
	\int_M \big\langle \nabla(  -\nabla^*\nabla \psi),\nabla \psi \big\rangle = -\int_M |\nabla^2\psi|^2 + \int_M \Rm * \nabla \psi * \nabla \psi. \label{first term L12 evolution}
\end{align}
\indent Next we obtain estimates on the term $\int \langle \nabla(\Rm * \psi), \nabla \psi \rangle = \int \langle \nabla \Rm * \psi + \Rm * \nabla \psi, \nabla \psi \rangle$ from equation (\ref{L12 evolution}). Since $|\Rm|_{C^{\infty}}<C_2$, we can use Young's Inequality, to show
 \begin{align}
 	\int_M \big\langle \nabla(\Rm * \psi), \nabla \psi \big\rangle \leq C'\int_M|\nabla \psi|^2 + C''\int_M|\psi|^2. \label{second term L12 evolution}
\end{align} 
\indent Finally, we consider the final term in (\ref{L12 evolution}), $\int \nabla A * \nabla \psi$. Using the estimates on $A$ from (\ref{A bounds}), we have 
\begin{align*}
	\int  \nabla A * \nabla \psi  = \int \nabla \psi^{*2} * \nabla^2\psi + \int \psi * \nabla \psi * \nabla^3\psi.
\end{align*} 
Integration by parts on the last term yields $\int \nabla A * \nabla \psi  = \int \nabla \psi^{*2} * \nabla^2\psi + \int \psi * \nabla^2\psi^{*2}$ and hence
\begin{align}
	\int_M \big\langle \nabla A, \nabla \psi \big\rangle \leq C''' \int_M |\nabla \psi|^2 + C_7\epsilon' \int_M |\nabla^2\psi|^2 \label{third term L12 evolution}
\end{align}
since $|\psi|_{C^k} < \epsilon'$ for $t \in [0,\til{T}]$.\\ 
\indent Combining (\ref{L12 evolution}), (\ref{first term L12 evolution}), (\ref{second term L12 evolution}), and (\ref{third term L12 evolution}) we see that
\begin{align}
	\frac{1}{2}\dt \int_M|\nabla \psi|_{\til{g}}^2dvol_{\til{g}}&\leq -\int_M| \nabla ^2 \psi|^2 + C_5\int_M|\psi|^2+ C_6 \int_M|\nabla \psi|^2 + C_7\epsilon' \int_M|\nabla^2\psi|^2.\label{1}
\end{align}
Hence, we choose $\epsilon'$ small enough so that $C_7\epsilon' < \frac{1}{2}$. Integrating (\ref{1}) from $0$ to $\til{T}$ we have
\begin{align}
	\int_0^{\til{T}}\int_M|\nabla^2 \psi|^2+\int_M|\nabla \psi(\til{T})|^2 &\leq \int_M |\nabla \psi_0|^2 + 2C_5\int_0^{\til{T}}\int_M|\psi|^2 + 2C_6\int_0^{\til{T}}\int_M|\nabla \psi|^2. \label{2.25}
\end{align}
Therefore, using the $L^2$ estimate from (\ref{L2}) and the $L^{1,2}$ estimate from (\ref{L12}) we have
\begin{align}
	\int_0^{\til{T}}\int_M|\nabla^2 \psi|^2 \leq C_7Te^{C_1\epsilon' T}|\psi_0|_{L^{1,2}}^2 \leq C_7Te^{C_1\epsilon' T}\epsilon. \label{L22}
\end{align}
\indent Notice that $(\ref{2.25})$ also gives bounds on $|\nabla \psi(\til{T})|_{L^2}^2$. Moreover by integrating (\ref{1}) from $0$ to $t$ for $t\in[0,\til{T}]$ these bounds hold not just at $\til{T}$ but for any $t\in[0,\til{T}]$. Hence we also have
$$\sup_{[0,\til{T}]}|\nabla \psi|_{L^2}^2 \leq C_7Te^{C_1\epsilon' T}\epsilon.$$

Now since $\dt \psi$ is second order in $\psi$, estimate (\ref{L22}) also gives $\int_0^{\til{T}}\int_M|\dt \psi|^2 \leq CTe^{C_1\epsilon' T}\epsilon$. Next we use induction to show that for any $p$ we have both:
$$\int_0^{\til{T}}\int_M|\nabla^p \psi|^2 \leq C(p)Te^{C_1\epsilon' T}\epsilon$$
and
$$\sup_{[0,\til{T}]}|\nabla^{p-1} \psi|_{L^2}^2 \leq C(p)Te^{C_1\epsilon' T}\epsilon.$$

\subsection{$L^{m+1,2}$ bounds on $\psi$ given $L^{m,2}$ bounds}
To produce $L^{m+1,2}$ bounds on $\psi$ given $L^{s,2}$ estimates for $s = 1, 2, \dots , m$ we compute the evolution of the $L^2$ norm of $\nabla^m\psi$.
\begin{align}
	\frac{1}{2}\dt \int_M|\nabla^m \psi|_{\til{g}}^2dvol_{\til{g}}&=\int_M\big\langle \nabla^m(  -\nabla^*\nabla \psi + \Rm*\psi + A),\nabla^m \psi \big\rangle. \label{Lr2 evolution}
\end{align}
\indent First we consider the term $\int \langle \nabla^m(-\nabla^*\nabla \psi), \nabla^m\psi \rangle$ above. Similar to, (\ref{first term L12 evolution}) commuting the covariant derivatives and using integration by parts yields
\begin{align*}
	\int_M \big\langle \nabla^m(-\nabla^*\nabla \psi), \nabla^m\psi \big \rangle = -\int_M |\nabla^{m + 1} \psi|^2 + \int_M \sum_{j = 0}^{m - 1} \nabla^j \Rm * \nabla^{m - j}\psi * \nabla^m \psi.
\end{align*}
Furthermore, using that $|\Rm|_{C^{\infty}}<C_2$ and employing Young's Inequality on each of the final $m - 1$ terms on the right-hand side we have that there exists a constant $C'$ such that:
\begin{align}
	\int_M \big\langle \nabla^m(-\nabla^*\nabla \psi), \nabla^m\psi \big \rangle \leq -\int_M |\nabla^{m + 1} \psi|^2 + C' \sum_{j = 0}^{m}|\nabla^j\psi|_{L^2}^2. \label{first term Lr2 evolution}
\end{align}
\indent Next we study the term $\int \langle \nabla^m(\Rm * \psi), \nabla^m\psi \rangle$ from equation (\ref{Lr2 evolution}). Again using that $|\Rm|_{C^{\infty}}<C_2$, by Young's Inequality we see that
\begin{align*} 
	\int_M \big\langle \nabla^m(\Rm * \psi), \nabla^m\psi \big\rangle = \int_M \nabla^m \psi * \nabla^m \psi + \cdots + \int_M \psi * \psi 
\end{align*}
and hence there exists a constant $C''$ such that
\begin{align} 
	\int_M \big\langle \nabla^m(\Rm * \psi), \nabla^m\psi \big\rangle \leq C'' \sum_{j = 0}^{m}|\nabla^j\psi|_{L^2}^2. \label{second term Lr2 evolution}
\end{align}
 
\indent Finally consider the term $\int \langle \nabla^m A, \nabla^m \psi \rangle$ from (\ref{Lr2 evolution}). Again we use the estimates on $A$ from (\ref{A bounds}). Here we have
\begin{align*} 
	\int_M \big \langle \nabla^m A, \nabla^m \psi \big \rangle \leq \int_M  \sum_{j = 0}^{m}\nabla^{j + 2}\psi * \nabla^{m - j}\psi * \nabla^m \psi + \int_M \sum_{j = 0}^m \nabla^{j + 1}\psi * \nabla^{m + 1 - j} \psi * \nabla^m \psi.
\end{align*}
\indent We will now show how to estimate the highest order terms in the right-hand side of the above inequality. First we rewrite the right-hand side as $\int \nabla^{m + 2}\psi * \nabla^m \psi * \psi + \nabla^{m + 1} \psi * \nabla^m \psi * \nabla \psi + \text{lower order terms}$. Integration by parts on the first term yields
\begin{align} 
	\int \langle \nabla^m A, \nabla^m \psi \rangle \leq \int \nabla^{m + 1}\psi * \nabla^{m + 1} \psi * \psi + \nabla^{m + 1} \psi * \nabla^m \psi * \nabla \psi + \text{lower order terms}. \label{Lr+1A1}
\end{align}
Next we use Young's Inequality on the second term on the right-hand side. In particular, Young's Inequality is written $ab \leq \eta a^2 + C(\eta) b^2$ where $\eta > 0$ can be taken arbitrarily small at the expense of making $C(\eta)$ large. Hence by Young's Inequality, 
\begin{align}
	\int \nabla^{m + 1} \psi * \nabla^m \psi * \nabla \psi \leq \eta \int |\nabla^{m + 1} \psi|^2 + C(\eta)\int |\nabla^m\psi|^2|\nabla \psi|^2. \label{Lr+1A2}
\end{align}
Therefore combining (\ref{Lr+1A1}) and (\ref{Lr+1A2}) and using that $|\psi|_{C^k} < \epsilon'$ for $t \in [0,\til{T}]$ we get
$\int_M \langle \nabla^m A, \nabla^m \psi \rangle \leq (C_8\epsilon' + \eta)\int_M |\nabla^{m + 1} \psi|^2 + C_9 \int_M |\nabla^m \psi|^2 + \text{lower order terms}$. Hence we choose $\eta = \frac{1}{4}$ and $\epsilon'$ sufficiently small so that 
\begin{align}
	C_8\epsilon' < \frac{1}{4}. \label{epsilon' C8}
\end{align} 
And so,
\begin{align}
	\int_M \langle \nabla^m A, \nabla^m \psi \rangle \leq \frac{1}{2}\int_M |\nabla^{m + 1} \psi|^2 + C_9 \int_M |\nabla^m \psi|^2 + \text{lower order terms}. \label{third term Lr2 evolution}
\end{align}

\indent We now have estimates for each term in the evolution of the $L^2$ norm of $\nabla^m \psi$ given in (\ref{Lr2 evolution}). In particular combining (\ref{Lr2 evolution}) (\ref{first term Lr2 evolution}), (\ref{second term Lr2 evolution}), and (\ref{third term Lr2 evolution}) we have
\begin{align}
	\frac{1}{2}\dt \int_M|\nabla^m \psi|_{\til{g}}^2dvol_{\til{g}} \leq -\frac{1}{2}\int_M |\nabla^{m + 1} \psi|^2 + C_{10}\sum_{j = 0}^{m}|\nabla^j\psi|_{L^2}^2. \label{Lk2 evolution}
\end{align}
Integrating from $0$ to $\til{T}$ we get
\begin{align}
	\int_0^{\til{T}}\int_M|\nabla^{m + 1} \psi|^2+\int_M|\nabla^m \psi(\til{T})|^2 \leq \int_M |\nabla^m \psi(0)|^2 + 2C_{10}\int_0^{\til{T}}\sum_{j = 0}^{m}|\nabla^j\psi|_{L^2}^2. \label{k+1}
\end{align}

Now we can employ the $L^{s,2}$ estimates for $s = 1, \dots, m$ to get $L^{m+1,2}$ bounds. In particular,
\begin{align*}
	\int_0^{\til{T}}\int_M|\nabla^{m + 1} \psi|^2 \leq C_{11}Te^{C_1\epsilon' T}|\psi_0|_{L^{m,2}}^2 \leq C_{11}Te^{C_1\epsilon' T}\epsilon.
\end{align*}
\indent Notice that $(\ref{k+1})$ also gives bounds on $|\nabla^m \psi(\til{T})|_{L^2}^2$. Moreover by integrating (\ref{Lk2 evolution}) from $0$ to $t$ for any $t\in[0,\til{T}]$, the bound on $|\nabla^m \psi|_{L^2}^2$ holds not just at $\til{T}$ but for any $t\in[0,\til{T}]$. Hence we also have
$$\sup_{[0,\til{T}]}|\nabla^m \psi|_{L^2}^2 \leq C_{11}Te^{C_1\epsilon' T}|\psi_0|_{L^{m,2}}^2 \leq C_{11}Te^{C_1\epsilon' T}\epsilon.$$

This proves that for any $p$
\begin{align}
\int_0^{\til{T}}\int_M|\nabla^p \psi|^2 \leq C(p)Te^{C_1\epsilon' T}|\psi_0|_{L^{p-1,2}}^2 \leq C(p)Te^{C_1\epsilon' T}\epsilon \label{Lp bound 1}
\end{align}
and
\begin{align}
	\sup_{[0,\til{T}]}|\nabla^{p-1} \psi|_{L^2}^2 \leq C(p)Te^{C_1\epsilon' T}|\psi_0|_{L^{p-1,2}}^2 \leq C(p)Te^{C_1\epsilon' T}\epsilon.\label{Lp bound 2}
\end{align}
Furthermore, since $\dt \psi$ is second order in $\psi$, (\ref{Lp bound 1}) also implies that $\int_0^{\til{T}}\int_M\big|\frac{\del^q}{\del t^q}\nabla^r \psi\big|^2\leq C\epsilon$ for any $q,r>0$, where C is independent of $\til{T}$.\\ 
\indent Now use the Sobolev Embedding Theorem, with respect to $\til{g}$, to obtain $C^k$ bounds on $\psi$ in terms of $\epsilon$. And hence by (\ref{rho leq psi}) we have $C^k$ bounds on $\rho$ in terms of $\epsilon$. In \cite{SCF}, Theorem 1.9, Streets and Tian prove that if there is a finite time singularity $\tau$ of the flow, then $\lim_{t \rightarrow \tau} \sup \{ |\Rm|_{C^0}, |DT|_{C^0}, |T|^2_{C^0}   \} = \infty$. Here $D$ denotes the Levi-Citia connection and $\Rm$ is the curvature of $D$. Therefore, the fact that the estimates above are independent of $\til{T}$ implies that the solution exists on $[0,T'']$. Again, using the short-time existence result of Streets and Tian (\cite{SCF}), the solution can be extended past time $T''$. Moreover, for $\epsilon$ sufficiently small, we maintain the $C^k$ estimates on $\rho(t)$ past time $T''$. This contradicts the maximality of $T''$.
\end{proof}

\medskip

\section{Dynamic Stability when $c_1(\til{J}) < 0$}\label{c1<0}
In this section we prove that when $c_1(\til{J}) < 0$, VNAHCF converges exponentially to the K\"{a}hler-Einstein structure $(\til{\omega},\til{J})$. As above we let $\rho(t) = (\omega(t)-\til{\omega},J(t) - \til{J})$.

\begin{thm} \label{dynamic stability c1 < 0} 
Let $(M^{2n},\til{\omega},\til{J})$ be a closed complex manifold where $(\til{\omega},\til{J})$ is a K\"{a}hler-Einstein structure such that $c_1(\til{J}) < 0$. Given a positive integer $k$, there exists $\epsilon = \epsilon(k) >0$ such that if $(\omega(0),J(0))$ is an almost hermitian structure with $\big|\rho(0)\big|_{C^{\infty}} < \epsilon$, then the solution to the volume-normalized AHCF starting at $(\omega(0),J(0))$ exists for all time and converges exponentially in $C^k$ to $(\til{\omega},\til{J})$.
\end{thm}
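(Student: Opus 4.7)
The plan is to upgrade the finite-time parabolic estimates of Theorem \ref{start close, stay close} to global-in-time exponential decay by exploiting the strict spectral gap $\lambda > 0$ from Corollary \ref{c1<0 implies LL<0}. As in the proof of Theorem \ref{start close, stay close}, I would work not with $\rho(t)$ directly but with the associated tangent vector $\psi(t) \in T_{(\til{\omega},\til{J})}\mathcal{C}$ supplied by Lemma \ref{psi is quadratic/cubic}, whose evolution $\dt \psi = \LL(\psi) + A$ is given by Lemma \ref{the flow of psi}. Exponential decay of $\psi$ is then transferred to $\rho$ via inequality (\ref{rho leq psi}), and the no-singularity criterion of Theorem 1.9 of \cite{SCF} together with short-time existence extends the flow to $[0,\infty)$.

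First choose $\epsilon' > 0$ small enough that the absorption arguments below all succeed, and then pick $\epsilon > 0$ as provided by Theorem \ref{start close, stay close} so that the flow exists with $|\rho(t)|_{C^k} < \epsilon'$ on a maximal interval $[0,T^*)$. The crucial input is an $L^2$ decay estimate for $\psi$: combining $\dt \psi = \LL(\psi) + A$, Corollary \ref{c1<0 implies LL<0}, the bound on $A$ from Lemma \ref{the flow of psi}(2), and the integration-by-parts step from equation (\ref{2.15}), one obtains
\begin{align*}
\frac{1}{2}\dt \int_M |\psi|_{\til{g}}^2\, dvol_{\til{g}} \leq \left(-\lambda + C\epsilon'\right) \int_M |\psi|_{\til{g}}^2\, dvol_{\til{g}}.
\end{align*}
Choosing $\epsilon'$ with $C\epsilon' \leq \lambda/2$ yields $|\psi(t)|_{L^2}^2 \leq e^{-\lambda t} |\psi(0)|_{L^2}^2$ on $[0,T^*)$, with a constant independent of $T^*$. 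This is the essential improvement over Theorem \ref{start close, stay close}, whose constants degenerate as $T \to \infty$.

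Next, bootstrap to exponential decay of $|\nabla^m \psi|_{L^2}^2$ for every $m$ by induction on $m$. Revisiting the evolution computation (\ref{Lk2 evolution}) from Theorem \ref{start close, stay close}, and choosing $\epsilon'$ small enough that the analogue of (\ref{epsilon' C8}) holds at each level used, the highest-order error coming from $A$ is absorbed into the principal good term $-\int |\nabla^{m+1}\psi|^2$. The remaining lower-order terms are controlled by Gagliardo-Nirenberg interpolation combined with the exponential decay already established at lower orders, producing a differential inequality of the form $\dt |\nabla^m \psi|_{L^2}^2 \leq -\mu_m |\nabla^m \psi|_{L^2}^2 + C_m e^{-\nu_m t}$ with $\mu_m, \nu_m > 0$. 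Gronwall yields $|\nabla^m \psi|_{L^2}^2 \leq C'_m e^{-\eta_m t}$ for some $\eta_m > 0$. Sobolev embedding on $(M,\til{g})$ then gives exponential $C^k$ decay of $\psi$, and hence of $\rho$ via (\ref{rho leq psi}). The resulting uniform-in-time $C^k$ bounds on the solution rule out any finite-time singularity by Theorem 1.9 of \cite{SCF}, so the flow extends past $T^*$; since $|\rho(t)|_{C^k}$ remains well below $\epsilon'$ past $T^*$, maximality of $T^*$ forces $T^* = \infty$, and exponential $C^k$ convergence of $\rho(t) \to 0$ follows.

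The main obstacle is the bootstrap step. At each Sobolev level $m$, the term $\int \langle \nabla^m A, \nabla^m \psi\rangle$ couples $|\nabla^m\psi|_{L^2}$ both to the highest derivative $|\nabla^{m+1}\psi|_{L^2}$ and to lower-order norms, and these couplings must be handled without destroying the spectral-gap decay already obtained at level zero. The delicacy lies in choosing $\epsilon'$ small uniformly across all Sobolev levels used, and in ordering the induction so that when treating level $m$ the lower-order norms are already known to decay exponentially. This is analogous to, but more quantitative than, the bootstrap already carried out in the proof of Theorem \ref{start close, stay close}.
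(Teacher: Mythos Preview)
Your proposal is correct and shares the paper's overall architecture: use the strict spectral gap (Corollary \ref{c1<0 implies LL<0}) to get exponential $L^2$ decay of $\psi$, then bootstrap to higher Sobolev norms, then invoke Sobolev embedding and (\ref{rho leq psi}). The difference lies in how the bootstrap is executed.

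The paper separates the argument into two lemmas. Lemma \ref{c1 < 0 L2 decay} first shows $|\psi(t)|_{C^k}<\delta$ for all $t\geq 0$ together with $L^2$ exponential decay, by integrating the $L^{m,2}$ evolution inequalities from a variable base point $t_0$ and using the already-established $L^2$ decay to bound $\int_{t_0}^t|\psi|_{L^2}^2\leq C e^{-\lambda t_0}$, yielding $t$-independent estimates. Lemma \ref{L2 to Ck decay} then upgrades $L^2$ decay to $C^k$ decay using a time cutoff $\theta(s)$ supported on $[t-1,t]$: one integrates $\partial_s(\theta(s)|\nabla^m\psi|_{L^2}^2)$ so that no boundary term appears at $s=t-\tfrac12$, and the resulting right-hand side is controlled by $\int_{t-1/2}^\infty|\nabla^j\psi|_{L^2}^2\leq Ce^{-\lambda t}$ for $j\leq m$. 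In particular, the paper never needs to produce a damping term of the form $-\mu_m|\nabla^m\psi|_{L^2}^2$; the good term $-|\nabla^{m+1}\psi|_{L^2}^2$ is only used to feed the next level of the induction via the integrated inequality.

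Your route instead manufactures the damping term directly via Gagliardo--Nirenberg: from $|\nabla^m\psi|_{L^2}^2\leq \eta|\nabla^{m+1}\psi|_{L^2}^2+C(\eta)|\psi|_{L^2}^2$ one converts $-\tfrac12|\nabla^{m+1}\psi|_{L^2}^2$ into $-\mu_m|\nabla^m\psi|_{L^2}^2$ modulo lower-order terms already decaying by induction, and then applies Gronwall. This is a legitimate and arguably more streamlined argument, collapsing the paper's two lemmas into a single continuity step on $[0,T^*)$. The trade-off is that your constants $\mu_m$, and the smallness threshold on $\epsilon'$ needed for absorption, depend on the interpolation constants level by level; since only finitely many levels are needed for $C^k$ via Sobolev embedding, this is harmless, but it is worth making explicit. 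The paper's cutoff method avoids interpolation entirely and keeps the dependence of constants more transparent.
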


\proof We prove Theorem \ref{dynamic stability c1 < 0} using two lemmas. As in Section 2 we have to deal with the non-linearity of the space of almost hermitian structures. To prove Theorem \ref{dynamic stability c1 < 0} we will show that there exists $\epsilon$ so that if $\big|\rho(0)\big|_{C^{\infty}} < \epsilon$, then $\psi(t)$ exponentially decays in $C^k$. Finally employing (\ref{rho leq psi}) exponential $C^k$ decay of $\rho(t)$ will follow from exponential $C^k$ decay of $\psi(t)$.

\begin{lemma}\label{c1 < 0 L2 decay} Given $\delta > 0$ and an integer $k \geq 0$, there exists $\epsilon_1 = \epsilon_1(\delta, k) > 0$ such that if $\big|\rho(0)\big|_{C^{\infty}} < \epsilon_1$ then $|\psi(t)|_{C^k} < \delta$ for all $t \geq 0$ and moreover $|\psi(t)|_{L^2}^2 \leq Ce^{-\lambda t}$ for all $t \geq 0$.
\end{lemma}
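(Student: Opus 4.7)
The plan is to drive exponential $L^2$ decay of $\psi(t)$ from the strict negative definiteness of $\LL$ (Corollary \ref{c1<0 implies LL<0}), while using parabolic smoothing on unit-length time cylinders to maintain the $C^k$ smallness that makes the nonlinear error $A$ of Lemma \ref{the flow of psi} absorbable. The argument is a bootstrap/continuity argument on $[0,\infty)$ in which $L^2$ decay and $C^k$ boundedness reinforce each other. Without loss of generality I assume $k \geq 2$ in what follows; the case of smaller $k$ follows by monotonicity of the $C^k$ norm.

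First, assume $|\psi(t)|_{C^k} \leq \delta$. By Lemma \ref{the flow of psi}(1) and Corollary \ref{c1<0 implies LL<0},
\begin{equation*}
\tfrac{1}{2}\dt |\psi|_{L^2(\til{g})}^2 \leq -\lambda|\psi|_{L^2}^2 + \int_M A\ast \psi.
\end{equation*}
Mimicking the manipulation leading to (\ref{L2 A bounds}) --- integration by parts on the $\nabla\psi\ast\nabla\psi\ast\psi$ piece of $A\ast\psi$ followed by the pointwise smallness $|\nabla^2\psi|_{C^0}\leq|\psi|_{C^k}\leq\delta$ --- yields $|\int A\ast\psi|\leq C\delta|\psi|_{L^2}^2$. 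Choose $\delta$ so small that $C\delta \leq \lambda/2$; then Gr\"onwall gives
\begin{equation*}
|\psi(t)|_{L^2}^2 \leq e^{-\lambda t}|\psi(0)|_{L^2}^2
\end{equation*}
on any interval where the $C^k$ hypothesis holds.

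Second, set $T^{\ast} = \sup\{T\geq 0 : |\psi(t)|_{C^k}\leq\delta \text{ for all } t\in[0,T]\}$. Theorem \ref{start close, stay close} (applied with $\epsilon' = \delta/2$ on $[0, T_0]$ for any fixed $T_0\geq 1$) gives $T^{\ast} \geq T_0 > 0$ once $\epsilon_1$ is sufficiently small. I claim $T^{\ast} = \infty$. Suppose otherwise; then by continuity $|\psi(T^{\ast})|_{C^k} = \delta$, while the first step gives $|\psi(t)|_{L^2}^2 \leq e^{-\lambda t}\epsilon_1$ throughout $[0,T^{\ast}]$. To contradict this, I upgrade the $L^2$ smallness at time $T^{\ast}-1$ to $C^k$ smallness at $T^{\ast}$ by running the $L^{m,2}$ estimates of Theorem \ref{start close, stay close} --- specifically (\ref{Lk2 evolution}) --- on the \emph{unit-length} cylinder $[T^{\ast}-1,T^{\ast}]$ rather than on $[0,T^{\ast}]$. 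Because the time length is $1$, the resulting constants carry no $e^{CT^{\ast}}$ factor; iterating from $m=0$ up to $m$ large enough (depending on $k$ and $\dim M$), with input $|\psi(T^{\ast}-1)|_{L^2}^2 \leq e^{\lambda}\epsilon_1$, gives $|\nabla^j\psi(T^{\ast})|_{L^2}^2 \leq C_j\epsilon_1$ for all $j\leq m$. Sobolev embedding then yields $|\psi(T^{\ast})|_{C^k}^2 \leq C_k\epsilon_1$, and shrinking $\epsilon_1$ so that $C_k\epsilon_1 < \delta^2$ contradicts $|\psi(T^{\ast})|_{C^k}=\delta$. Hence $T^{\ast}=\infty$, the $C^k$ bound and exponential $L^2$ decay hold globally, and the lemma follows.

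The main obstacle is the last step. Applying the Theorem \ref{start close, stay close} estimates over $[0,T^{\ast}]$ directly is useless because those constants grow like $T^{\ast}e^{C\delta T^{\ast}}$; the remedy is the localization to unit-length parabolic cylinders, fed at the left endpoint by the exponentially-decaying $L^2$ norm. It is precisely here that the strict negativity of $\LL$ (as opposed to mere negative semi-definiteness in the Calabi-Yau case) is essential, and why the Calabi-Yau case in the next section requires a genuinely different argument.
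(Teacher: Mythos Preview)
Your argument is correct in outline and very close in spirit to the paper's; both derive $\tfrac12\dt|\psi|_{L^2}^2\le(-\lambda+C\delta)|\psi|_{L^2}^2$ from Corollary~\ref{c1<0 implies LL<0} and the $A$-estimate, restrict to $\delta$ small, and then run a continuation argument for the $C^k$ bound. The organizational difference lies in the $L^2\to C^k$ upgrade. The paper integrates the $L^{m,2}$ evolution inequalities from a \emph{fixed} time $t_0$ to $t$, arriving at an estimate of the form $|\psi(t)|_{C^k}\le C_1(k)|\psi(t_0)|_{C^{p-1}}+C_2(k)e^{-\lambda t_0}$ (equation~(\ref{c1 < 0 7})); it then kills the second term by taking $t_0$ large and kills the first term---the higher-order boundary data at $t_0$---by a \emph{second} application of Theorem~\ref{start close, stay close} with a smaller $\epsilon'$. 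Your version instead localizes to the moving unit cylinder $[T^\ast-1,T^\ast]$ and feeds in only the decayed $L^2$ norm. This is slightly cleaner and is in fact the mechanism the paper uses later in Lemma~\ref{L2 to Ck decay}.

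One point you glossed over deserves to be made explicit. Integrating (\ref{Lk2 evolution}) from $T^\ast-1$ to $T^\ast$ for $m\ge1$ leaves the boundary term $|\nabla^m\psi(T^\ast-1)|_{L^2}^2$, which is \emph{not} controlled by $|\psi(T^\ast-1)|_{L^2}^2$ alone (and bounding it by $\delta^2$ would defeat the purpose). You need either a temporal cutoff $\theta(s)$ vanishing at $T^\ast-1$---exactly as in the proof of Lemma~\ref{L2 to Ck decay}---or the equivalent mean-value-theorem trick of choosing a nested sequence of times $T^\ast-1<s_1<s_2<\cdots<T^\ast$ at which the $m$th-order norm is controlled by the time integral from the previous step. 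Your phrase ``parabolic smoothing on unit-length time cylinders'' suggests you have this in mind; just make it explicit. With that detail in place your proof is complete, and arguably more streamlined than the paper's since it avoids the second invocation of Theorem~\ref{start close, stay close}.
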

  
\proof As in Section \ref{linstab}, let $\lambda = \min \{ |\lambda_i| : \text{$\lambda_i$ is an eigenvalue of $\LL$} \}$. Further  let $\psi(t) \in T_{(\til{\omega},\til{J})}\mathcal{C}$ be the element of the tangent space, from Lemma \ref{psi is quadratic/cubic}, associated to $\rho(t)$. Recall that from Corollary \ref{c1<0 implies LL<0} that $c_1(\til{J}) < 0$ implies that
\begin{align}
	\int_M \big\langle \LL_{(\til{\omega},\til{J})} \psi, \psi \big\rangle_{\til{g}} \leq -\lambda|\psi|_{L^2(\til{g})}^2. \label{LLpsi leq psi}
\end{align} 
And by (\ref{linearization}),
\begin{align}
\frac{1}{2}\dt \int_M|\psi|_{\til{g}}^2dvol_{\til{g}} = \int_M\bigg\langle \dt \psi, \psi \bigg\rangle = \int_M\bigg\langle \LL \psi + A, \psi \bigg\rangle. \label{c1 < 0 L2 evolution}
\end{align}

Then for any $t$ for which $|\psi(t)|_{C^2} < {\delta}$, we can employ the bound
\begin{align}
	\int \langle A, \psi \rangle \leq C_1{\delta}|\psi|_{L^2}^2 \label{A delta bound}
\end{align} 
derived in (\ref{2.15}) and (\ref{L2 A bounds}). Combining (\ref{LLpsi leq psi}), (\ref{c1 < 0 L2 evolution}) and (\ref{A delta bound}) yields

\begin{align*}
	\frac{1}{2}\dt \int_M|\psi|_{\til{g}}^2dvol_{\til{g}} \leq -\lambda|\psi|_{L^2}^2 + C_1{\delta}|\psi|_{L^2}^2.
\end{align*}
Here we choose ${\delta}$ so that
\begin{align}
	C_1{\delta} < \frac{1}{2}\lambda. \label{c1 < 0 epsilon'}
\end{align}
Integrating from $0$ to $t$ yields $L^2$ exponential decay of $\psi(t)$. In particular,
\begin{align}
	|\psi(t)|_{L^2}^2 \leq e^{-\lambda t}|\psi(0)|_{L^2}^2 \label{L2 decay c1 < 0} 
\end{align}
for any $t$ for which $|\psi(t)|_{C^2} < {\delta}$. Therefore to complete the lemma, we will show that given $k \geq 2$, there exists $\epsilon_1(k, \delta) > 0$ such that if $|\rho(0)|_{C^{\infty}} < \epsilon_1$, then $|\psi(t)|_{C^k} < {\delta}$ for all $t \in [0,\infty)$. Notice again that from (\ref{psi leq rho}) it follows that $|\rho(0)|_{C^{\infty}} < \epsilon_1$ implies $|\psi(0)|_{C^{\infty}} < \epsilon_1$.\\
\indent By Theorem \ref{start close, stay close} we know that given $T > 0$, $k \geq 0$ and $\epsilon'>0$, there exists $\epsilon > 0$ such that $|\rho(0)|_{C^{\infty}} < \epsilon$ implies that $|\psi(t)|_{C^k} < \epsilon'$ on $[0,T)$. We apply Theorem \ref{start close, stay close} with $\epsilon' = {\delta}$ and ${\delta}$ sufficiently small so that (\ref{c1 < 0 epsilon'}) holds. Let $\epsilon_2$ denote a constant that is small enough so that if $|\psi(0)|_{C^{\infty}} < \epsilon_2$ then $|\psi(t)|_{C^k} < {\delta}$ on $[0,T)$ and assume that 
\begin{align}
	|\psi(0)|_{C^{\infty}} < \epsilon_2. \label{epsilon2}
\end{align}
\indent Given $T$, let $t_0 < t<T$, then integrating (\ref{L2 evolution and estimate}) from $t_0$ to $t$ we have
\begin{align}
	\int_{t_0}^{t}|\nabla \psi|_{L^2}^2 \leq \frac{1}{2}|\psi(t_0)|_{L^2}^2+C_3\int_{t_0}^{t}|\psi(s)|_{L^2}^2. \label{c1 < 0 1}
\end{align}
Furthermore since (\ref{L2 decay c1 < 0}) holds on $[0,T)$,
\begin{align}
	\int_{t_0}^{t}|\psi(s)|_{L^2}^2 \leq \int_{t_0}^{t}e^{-\lambda s}|\psi(0)|_{L^2}^2 = \frac{1}{\lambda}e^{-\lambda t_0}|\psi(0)|_{L^2}^2. \label{c1 < 0 2}
\end{align}
Therefore combining (\ref{c1 < 0 1}) and (\ref{c1 < 0 2}) yields
\begin{align}
	\int_{t_0}^{t}|\nabla \psi|_{L^2}^2 \leq \frac{1}{2}|\psi(t_0)|_{L^2}^2+\frac{C_3}{\lambda}e^{-\lambda t_0}|\psi(0)|_{L^2}^2 \leq \left(\frac{1}{2} + \frac{C_3}{\lambda}\right)e^{-\lambda t_0}|\psi(0)|_{L^2}^2. \label{c1 < 0 3}
\end{align}
The last inequality is again by (\ref{L2 decay c1 < 0}). The key observation here is that the $L^{1,2}$ estimate in (\ref{c1 < 0 3}) is independent of $t$.

Next, to obtain a similar $L^{2,2}$ estimate we integrate (\ref{1}) from $t_0$ to $t$.
\begin{align}
	 \int_{t_0}^{t}|\nabla^2 \psi|_{L^2}^2 + |\nabla \psi(t)|_{L^2}^2 \leq |\nabla \psi(t_0)|_{L^2}^2+C_5\int_{t_0}^{t}|\psi(t)|_{L^2}^2+C_6\int_{t_0}^{t}|\nabla \psi(t)|_{L^2}^2.\label{c1 < 0 4}
\end{align}
Bounding the last two terms of (\ref{c1 < 0 4}) using (\ref{c1 < 0 2}) and (\ref{c1 < 0 3}) yields
\begin{align*}
	 \int_{t_0}^{t}|\nabla^2 \psi|_{L^2}^2 + |\nabla \psi(t)|_{L^2}^2 \leq |\nabla \psi(t_0)|_{L^2}^2+Ce^{-\lambda t_0}|\psi(0)|_{L^2}^2.
\end{align*}
Again the key observation is that the estimate above is independent of $t$.\\ 
\indent Using the same inductive argument as in the proof of Theorem \ref{start close, stay close} shows that for any $p$,
\begin{align}
	 \int_{t_0}^{t}|\nabla^p \psi|_{L^2}^2 + |\nabla^{p-1} \psi(t)|_{L^2}^2 \leq C_1(p)|\psi (t_0)|_{L^{p-1,2}}^2+C_2(p)e^{-\lambda t_0}\label{c1 < 0 6}
\end{align} 
where $C_1(p)$ and $C_2(p)$ are independent of $t$. Notice that there exists a constant $C$, such that $|\psi (t_0)|_{L^{p-1,2}}^2 \leq C|\psi (t_0)|_{C^{p-1}}$. Hence by (\ref{c1 < 0 6}) we have $|\nabla^{p-1} \psi(t)|_{L^2}^2 \leq C_1'(p)|\psi (t_0)|_{C^{p-1}}+C_2(p)e^{-\lambda t_0}$. Therefore applying the Sobolev Embedding Theorem we have
\begin{align}
	|\psi(t)|_{C^k} \leq C_1(k)|\psi(t_0)|_{C^{p-1}} + C_2(k)e^{-\lambda t_0} \label{c1 < 0 7}
\end{align}
where $C_1(k)$ and $C_2(k)$ are independent of $t$.\\ 
\indent Since (\ref{c1 < 0 7}) is independent of t, to prove that $|\psi(t)|_{C^k} < {\delta}$ for all $t\in [0,\infty)$, it suffices to show that there exists a constant $\epsilon_1$ with $0 < \epsilon_1 \leq \epsilon_2$ and such that $|\rho(0)|_{C^{\infty}} < \epsilon_1$ implies that the right-hand side of (\ref{c1 < 0 7}) is bounded above by ${\delta}$.\\
\indent First we bound the second term on the right-hand side of (\ref{c1 < 0 7}). Notice that, given ${\delta} > 0$ small enough so that we have (\ref{c1 < 0 epsilon'}), the argument above which led to inequality (\ref{c1 < 0 7}) holds under the assumption (\ref{epsilon2}). Furthermore, notice that the estimate in (\ref{c1 < 0 7}) holds for $t_0 < T$, independent of $T$. Therefore we take $T$ to be sufficiently large so that $T>t_0$ and 
\begin{align}
	C_2(k)e^{-\lambda t_0} < \frac{1}{2}{\delta}. \label{epsilon2 requirement}
\end{align}
\indent To bound the first term in (\ref{c1 < 0 7}) we again use Theorem \ref{start close, stay close} with $\epsilon' = \frac{1}{2C_1(k)} {\delta}$ and $T > t_0$. Hence, by Theorem \ref{start close, stay close} there exists $\epsilon_3 > 0$ such that $|\rho(0)|_{C^{\infty}} < \epsilon_3$ implies that
\begin{align}
	|\psi|_{C^{p-1}} < \epsilon' = \frac{1}{2C_1(k)} {\delta} \label{epsilon3}
\end{align}
for $t \in [0,T) \supset [0,t_0]$.\\
\indent Finally, choose $\epsilon_1 = min \{ \epsilon_2, \epsilon_3 \}$. Hence combining (\ref{c1 < 0 7}), (\ref{epsilon2 requirement}) and (\ref{epsilon3}) proves that if $|\psi(0)|_{C^{\infty}} < \epsilon_1$, then (\ref{c1 < 0 7}) holds independent of $t$. Therefore it follows that $|\psi(t)|_{C^k} < {\delta}$ for all $t \geq 0$ and moreover the $L^2$ decay estimate in (\ref{L2 decay c1 < 0}) holds for all $t \geq 0$.
\qed

\medskip
\indent To finish the proof of Theorem \ref{dynamic stability c1 < 0}, we show that the $L^2$ decay estimate above and parabolic theory can be used to prove $C^k$ decay of $\psi(t)$.

\begin{lemma}\label{L2 to Ck decay} Given an integer $k \geq 2$, there exists ${\delta} = {\delta}(k) > 0$ such that if both $|\psi(t)|_{C^k} < {\delta}$ for all $t\in[0,\infty)$ and $|\psi(t)|_{L^2}^2 \leq Ce^{-\lambda t}$ then $|\psi(t)|_{C^k} \leq C(k)e^{-\lambda t}$.
\end{lemma}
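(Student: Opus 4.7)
The plan is to promote the $L^2$ exponential decay of $\psi(t)$ into exponential decay in every Sobolev norm $\|\nabla^m\psi\|_{L^2}$, and then extract $C^k$ exponential decay via the Sobolev embedding theorem. The uniform smallness $|\psi(t)|_{C^k}<\delta$ in the hypothesis, together with the uniform-in-time higher-regularity estimates already established in the proof of Theorem \ref{start close, stay close} (cf. (\ref{Lp bound 2})), provides the uniform Sobolev bounds needed to close the bootstrap.

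The engine of the bootstrap is the differential energy inequality (\ref{Lk2 evolution}),
$$\frac{1}{2}\dt \|\nabla^m \psi\|_{L^2}^2 \leq -\frac{1}{2}\|\nabla^{m+1}\psi\|_{L^2}^2 + C\sum_{j=0}^m\|\nabla^j\psi\|_{L^2}^2,$$
valid provided $\delta$ is chosen to meet the analogue of the absorption condition (\ref{epsilon' C8}). To convert this into a tractable scalar ODE in $f_m(t):=\|\nabla^m\psi\|_{L^2}^2$, I would appeal twice to the Gagliardo--Nirenberg interpolation inequality on the closed manifold $(M,\til{g})$: first in the form $\|\nabla^j\psi\|_{L^2}^2 \leq \eta\|\nabla^{m+1}\psi\|_{L^2}^2+C(\eta)\|\psi\|_{L^2}^2$ with $\eta$ small, to absorb the lower-order derivative terms on the right-hand side into the dissipation term on the left; and then in the reverse direction to bound $\|\nabla^{m+1}\psi\|_{L^2}^2$ from below by a coercive multiple of $f_m(t)$ up to a correction proportional to $\|\psi\|_{L^2}^2$. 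The net result is a scalar ODE inequality of the form $f_m'(t) + \mu f_m(t) \leq Ce^{-\lambda t}$ which integrates to exponential decay at rate arbitrarily close to $\lambda$; the full rate $\lambda$ itself can then be recovered by running the same analysis on the weighted quantity $e^{\lambda t}f_m(t)$, using the uniform smallness of $\psi$ to absorb the nonlinear error.

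Inducting on $m$ and running the bootstrap up through $m=k+\lceil n \rceil+1$ yields $\|\nabla^m\psi\|_{L^2}^2 \leq C_m e^{-\lambda t}$ in that range, and Sobolev embedding on $(M,\til{g})$ converts this into $|\psi(t)|_{C^k}\leq C(k)e^{-\lambda t}$ as required. The main obstacle is preserving the full rate $\lambda$ through the bootstrap: every application of interpolation threatens to cost a small amount in the decay exponent, and the only way to recover the loss is to tune the interpolation parameter $\eta$, the nonlinear absorption constant controlling $A$, and the smallness threshold $\delta=\delta(k)$ simultaneously. This tuning is what fixes the required smallness of $\delta(k)$ and is the single place in the argument where the hypothesis $|\psi(t)|_{C^k}<\delta$ is used in an essential, not merely auxiliary, way.
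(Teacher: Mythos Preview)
Your approach is correct and takes a genuinely different route from the paper. The paper does not use Gagliardo--Nirenberg interpolation to manufacture a coercive scalar ODE for $f_m(t)=\|\nabla^m\psi\|_{L^2}^2$. Instead it runs a two-step parabolic smoothing argument at each level: first it integrates the inequality $\dt\|\nabla^{m-1}\psi\|_{L^2}^2\le-\|\nabla^m\psi\|_{L^2}^2+\text{(lower order)}$ from $t$ to $\infty$ to obtain exponential decay of the \emph{space--time} integral $\int_t^\infty\|\nabla^m\psi\|_{L^2}^2\le Ce^{-\lambda t}$; then it multiplies the next-level energy inequality by a time cutoff $\theta(s)$ vanishing at $s=t-\tfrac12$ and integrates over $[t-\tfrac12,t]$ to convert this into pointwise-in-time decay $\|\nabla^m\psi(t)\|_{L^2}^2\le Ce^{-\lambda t}$. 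This avoids interpolation entirely, and the full rate $\lambda$ is preserved automatically because the only inhomogeneous term is $\|\psi\|_{L^2}^2\le Ce^{-\lambda t}$ itself. Your interpolation route is more direct---it collapses the cascade into a single ODE per level---but your concern about losing the rate is overstated: once the lower-order terms are absorbed, choosing the interpolation parameter $\eta$ small makes the coercivity constant $\mu=\tfrac{1}{2\eta}$ exceed $\lambda$, and then $f_m'+\mu f_m\le Ce^{-\lambda t}$ integrates directly to $f_m(t)\le Ce^{-\lambda t}$ without any weighted-quantity trick. One minor correction: your appeal to (\ref{Lp bound 2}) for ``uniform-in-time higher-regularity estimates'' is misplaced, since those bounds grow like $Te^{C\epsilon'T}$; what actually closes the bootstrap is just the standing hypothesis $|\psi(t)|_{C^k}<\delta$, exactly as in the paper.
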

\proof We begin the proof by deriving an $L^{1,2}$ exponential decay estimate. The same argument that was used to derive (\ref{L12 estimate}) shows that there exists a constant $C_1$ such that
\begin{align*}
	\dt |\psi|_{L^2}^2 \leq -|\nabla \psi|_{L^2}^2 + C_1|\psi|_{L^2}^2, 
\end{align*}
where $C_1$ depends on both $(\til{\omega},\til{J})$ and $|\psi(t)|_{C^2}$; but by assumption $|\psi|_{C^2} < \delta$ for all $t \geq 0$. Integrating from $t$ to $\infty$ yields,
\begin{align}
	\int_{t}^{\infty}|\nabla \psi|_{L^2}^2 \leq |\psi(t)|_{L^2}^2 + C_1 \int_{t}^{\infty}|\psi|_{L^2}^2 \leq Ce^{-\lambda t}. \label{c1 < 0 9}
\end{align}
The last inequality follows from the assumed $L^2$ exponential decay estimate.\\
\indent Next, for a fixed $t$, let $\theta(s)$ be a smooth function which is $0$ for $s \in \left[t-1,t-\frac{1}{2}\right]$, monotonically increasing from $0$ to $1$ for $s \in \left[t - \frac{1}{2}, t \right]$ and $1$ for $s \geq t$. As we shall see below, $\theta(s)$ will be used to deal with boundary terms which arise in the parabolic estimates that follow. The same argument that was used to produce (\ref{1}) shows that there exist constants such that
\begin{align}
	\frac{1}{2}\dt \int_M|\nabla \psi|_{\til{g}}^2dvol_{\til{g}}&\leq -\int_M| \nabla ^2 \psi|^2 + C_2\int_M|\psi|^2+ C_3 \int_M|\nabla \psi|^2 + C_4\delta \int_M|\nabla^2\psi|^2, \label{L2 to Ck decay 1}
\end{align}
again these constants depend on both $(\til{\omega},\til{J})$ and $|\psi(t)|_{C^2}$. Now we choose $\delta$ sufficiently small so that $C_4\delta < \frac{1}{2}$. Hence using (\ref{L2 to Ck decay 1}) and that both $\theta(s)$ and its derivative are uniformly bounded,
\begin{align}
	\frac{\partial}{\partial s} \left( \theta(s)|\nabla \psi (s)|_{L^2}^2 \right) \leq C_5|\nabla \psi(s)|_{L^2}^2 + C_6|\psi(s)|_{L^2}^2. \label{c1 < 0 10}
\end{align}
We integrate (\ref{c1 < 0 10}) in $s$ from $t - \frac{1}{2}$ to $t$ for $t \geq 1$. Using that $\theta \left(t - \frac{1}{2}\right) = 0$ and $\theta(t) = 1$ we get
\begin{gather}
\begin{split} \label{by above}
	|\nabla \psi(t)|_{L^2}^2 &\leq C_5\int_{t - \frac{1}{2}}^{t}|\nabla \psi(s)|_{L^2}^2 + C_6\int_{t - \frac{1}{2}}^{t}|\psi(s)|_{L^2}^2\\
	&\leq C_5\int_{t - \frac{1}{2}}^{\infty}|\nabla \psi(s)|_{L^2}^2 + C_6\int_{t - \frac{1}{2}}^{\infty}|\psi(s)|_{L^2}^2. 
\end{split}
\end{gather}
Hence using the $L^2$ decay assumption and (\ref{c1 < 0 9}) it follows from (\ref{by above}) that
\begin{align}
	|\nabla \psi(t)|_{L^2}^2 \leq Ce^{-\lambda t}. \label{c1 < 0 11}
\end{align}
This proves exponential $L^{1,2}$ decay.\\
\indent Next we prove $L^{2,2}$ decay. By (\ref{L2 to Ck decay 1}) with $\delta$ small enough so that $C_4 \delta < \frac{1}{2}$,
\begin{align*}
	\dt |\nabla \psi|_{L^2}^2 \leq -|\nabla^2 \psi|_{L^2}^2 + 2C_3|\nabla \psi|_{L^2}^2 + 2C_2|\psi|_{L^2}^2. 
\end{align*}
Integrating from $t$ to $\infty$ yields
\begin{align}
	\int_{t}^{\infty}|\nabla^2 \psi|_{L^2}^2 \leq |\nabla \psi(t)|_{L^2}^2 + 2C_3\int_{t}^{\infty}|\nabla \psi|_{L^2}^2 + 2C_2\int_{t}^{\infty}|\psi|_{L^2}^2 \leq Ce^{-\lambda t}. \label{c1 < 0 13}
\end{align}
Where (\ref{c1 < 0 11}), (\ref{c1 < 0 9}), and the $L^2$ decay assumption were used in the first, second, and third term on the right-hand side respectively.\\
\indent Now, as in (\ref{epsilon' C8}) we choose $\delta$ small enough so that (\ref{Lk2 evolution}) holds for $m = 2$. Therefore there exists a constant $C_6$ such that
\begin{align*}
	\frac{1}{2}\dt \int_M|\nabla^2 \psi|_{\til{g}}^2dvol_{\til{g}} \leq -\frac{1}{2}\int_M |\nabla^{3} \psi|^2 + C_{6}\sum_{j = 0}^{2}|\nabla^j\psi|_{L^2}^2.
\end{align*}
Hence, there exists a constant $C_7$ so that
\begin{align}
	\frac{\partial}{\partial s} \left( \theta(s)|\nabla^2 \psi (s)|_{L^2}^2 \right) \leq C_7|\nabla^2 \psi(s)|_{L^2}^2 + C_7|\nabla \psi(s)|_{L^2}^2 + C_7| \psi(s)|_{L^2}^2. \label{c1 < 0 14}
\end{align}
We integrate (\ref{c1 < 0 14}) in $s$ from $t - \frac{1}{2}$ to $t$ for $t \geq 1$. Using that $\theta \left(t - \frac{1}{2}\right) = 0$ and $\theta(t) = 1$ we get
\begin{align*}
	|\nabla^2 \psi(t)|_{L^2}^2 \leq C_7\int_{t - \frac{1}{2}}^{t}\sum_{j = 0}^{2}|\nabla^j \psi(s)|_{L^2}^2 \leq C_7\int_{t - \frac{1}{2}}^{\infty}\sum_{j = 0}^{2}|\nabla^j \psi(s)|_{L^2}^2.
\end{align*}
Hence using (\ref{c1 < 0 13}), (\ref{c1 < 0 9}) and the assumed $L^2$ decay we get
\begin{align*}
	|\nabla^2 \psi(t)|_{L^2}^2 \leq Ce^{-\lambda t}. 
\end{align*}
This gives exponential $L^{2,2}$ decay.\\
\indent Continuing in this way we get
\begin{align*}
	|\psi(t)|_{L^{p,2}}^2 \leq C(p)e^{-\lambda t}.
\end{align*}
Furthermore by the Sobolev Embedding Theorem we get $|\psi(t)|_{C^k} \leq C(k)e^{-\lambda t}$. 
\qed

\medskip
\indent By Lemma \ref{L2 to Ck decay} we know that given $k \geq 2$, there exists $\delta > 0$ so that if both $|\psi(t)|_{C^k} < \delta$ and $|\psi(t)|_{L^2}^2 \leq Ce^{-\lambda t}$ hold for all $t \geq 0$, then $|\psi(t)|_{C^k} \leq C(k)e^{-\lambda t}$ for all $t \geq 0$. Furthermore by Lemma \ref{c1 < 0 L2 decay} we know that there exists $\epsilon_1 > 0$ such that if $|\psi(0)|_{C^{\infty}} < \epsilon_1$, then both $|\psi(t)|_{C^k} < \delta$ and $|\psi(t)|_{L^2}^2 \leq Ce^{-\lambda t}$ hold for all $t \geq 0$. Hence let $\delta$ be determined by Lemma \ref{L2 to Ck decay}. To finish the proof of Theorem \ref{dynamic stability c1 < 0}, we apply Lemma \ref{c1 < 0 L2 decay} with $\epsilon = \epsilon_1$ and note that by (\ref{rho leq psi}) exponential decay of $\rho(t)$ follows from exponential decay of $\psi(t)$.
\qed 

\medskip

\section{Dynamic Stability in the Calabi-Yau Case}
In Section \ref{c1<0} we proved Theorem \ref{dynamic stability} when $c_1(\til{J}) < 0$ by using that, in this case, the linearization $\LL$ is negative definite. However in the Calabi-Yau case, the kernel of $\LL$ is non-trivial and so the non-linear part of the flow is no longer controlled by the linear part. In this section we will show that in the Calabi-Yau case we can find a Calabi-Yau structure to which the flow exponentially converges.\\

\indent In order to find a Calabi-Yau structure to which the flow exponentially converges we will construct a sequence $\{ (\omega_j,J_j) \}$ of successively better Calabi-Yau structures; in the sense that the solution $(\omega(t),J(t))$ to the VNAHCF converges exponentially on larger and larger intervals. Moreover we will prove that each of these Calabi-Yau structures is contained in a fixed neighborhood of the original Calabi-Yau structure (see Theorem \ref{new KE structure} part $(2)$). This will allow us to extract a limit $(\omega_{KE},J_{KE})$ to which $\{ (\omega_j,J_j) \}$ subconverges. One could imagine that if this sequence failed to converge that we would only be able to conclude that the solution becomes asymptotic to the space of Calabi-Yau structures.\\
\indent In order to choose a new Calabi-Yau structure we will use Koiso's Theorem. Before stating Koiso's Theorem we need a definition.

\defn Let $\mathcal{AC}$ denote the space of almost complex structures on $M$ modulo diffeomorphism. A complex structure $J$ is \emph{unobstructed} if for any $\dot{J} \in T_J\mathcal{AC}$ such that $\dot{N}(\dot{J}) = 0$, there exists a path of complex structures $J(a)$ such that $J(0) = J$ and $\da\big|_{a=0}J(a)=\dot{J}$. Again, $N$ denotes the Nijenhuis tensor.

\begin{thm}\label{Koiso's Theorem} (Koiso \cite{KoisoKEstructures}) Let $(\omega, J)$ be a K\"{a}hler-Einstein structure on $M$. Assume that:
	\begin{enumerate}
		\item the first Chern class of $J$ is zero;
		\item $J$ is unobstructed.
	\end{enumerate}
Then the space of K\"{a}hler-Einstein structures, modulo diffeomorphism, around $(\omega, J)$ is a manifold.
\end{thm}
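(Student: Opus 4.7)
The plan is to parametrize nearby K\"ahler-Einstein structures by combining Kodaira-Spencer deformation of the complex structure (granted by the unobstructedness hypothesis) with Yau's solution of the Calabi conjecture for the metric, thereby producing a smooth map from a finite-dimensional parameter space onto a neighborhood of $(\omega, J)$ in the space of K\"ahler-Einstein structures, which I then show is a local diffeomorphism after quotienting by diffeomorphisms.

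First I would linearize the full K\"ahler-Einstein conditions (compatibility, integrability $N(J) = 0$, the K\"ahler condition $d\omega = 0$, and $\Ric = 0$) at $(\omega, J)$. Using the Weitzenb\"ock-Bochner identities of Lemmas \ref{bochner F} and \ref{bochner G} together with the vanishing $s = 0$ in the Calabi-Yau case, the space of infinitesimal K\"ahler-Einstein deformations decomposes into a harmonic $(1,1)$-form $b \in H^{1,1}_{\bar{\partial}}(M)$ (the infinitesimal K\"ahler class deformation), a $\bar{\partial}$-harmonic $T^{1,0}$-valued $(0,1)$-form $\dot{J} \in H^1(M, T^{1,0}M)$ subject to the linearized integrability $\dot{N}(\dot{J}) = 0$, and a $(2,0)+(0,2)$-piece of $\dot{\omega}$ which is determined by $\dot{J}$ through the linearized compatibility equation (\ref{linearized compatibility}). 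Together these span a finite-dimensional candidate tangent space to the moduli space.

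Next I would integrate each piece to an honest family. By the unobstructedness hypothesis, every $\dot{J}$ with $\dot{N}(\dot{J}) = 0$ extends to a smooth family $J(a)$ of integrable complex structures parametrized by a neighborhood of $0$ in a finite-dimensional vector space; since vanishing of $c_1$ is preserved under small deformations of the complex structure, each $J(a)$ is still Calabi-Yau. For each pair $(J(a), [\omega] + b)$ with $[\omega] + b$ still a K\"ahler class for $J(a)$, Yau's theorem produces a unique Ricci-flat K\"ahler metric $\omega(a,b)$ compatible with $J(a)$ in that class, depending smoothly on $(a, b)$ by elliptic regularity and uniqueness. The resulting map $\Phi(a,b) \doteq (\omega(a,b), J(a))$ lands in the K\"ahler-Einstein locus, and its derivative at the origin is, by construction, an isomorphism onto the candidate tangent space above.

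To conclude I would fix a DeTurck-type slice transverse to the diffeomorphism orbit through $(\omega, J)$, as in Proposition 5.4 of \cite{SCF}, and apply the implicit function theorem to $\Phi$ composed with projection to the slice. Injectivity on the slice uses the uniqueness part of Yau's theorem in each K\"ahler class together with the injectivity of the Kuranishi-type parametrization on the unobstructed complex-structure slice, while surjectivity onto nearby K\"ahler-Einstein structures on the slice follows from the derivative being an isomorphism. The main obstacle is establishing joint smooth dependence of $\omega(a,b)$ on both parameters: the Monge-Amp\`ere equation Yau solves has coefficients depending on $J(a)$, so one must verify smooth $J(a)$-dependence of the Yau metric, and one must align the Kuranishi slice for $J$ with the diffeomorphism gauge slice so that $\Phi$ genuinely charts a neighborhood of $(\omega, J)$ in the moduli space.
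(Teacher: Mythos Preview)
The paper does not prove this statement: Theorem \ref{Koiso's Theorem} is quoted from Koiso \cite{KoisoKEstructures} and used as a black box, so there is no ``paper's own proof'' to compare against. Your outline is essentially the standard argument behind Koiso's result, and it is sound in broad strokes.

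A few remarks on your sketch. Your identification of the infinitesimal K\"ahler-Einstein deformations with $H^{1,1}_{\bar\partial}(M)\oplus H^1(M,T^{1,0}M)$ is exactly right in the $c_1=0$ case, and the integration step via unobstructedness plus Yau's theorem is the correct mechanism. The point you flag as the ``main obstacle''---smooth dependence of the Yau metric on the complex structure parameter $a$---is indeed the technical heart; it follows from the implicit function theorem applied to the complex Monge--Amp\`ere operator, whose linearization at a Ricci-flat metric is $\Delta$ acting on functions modulo constants, which is invertible. One small caution: invoking ``Proposition 5.4 of \cite{SCF}'' for the slice is a bit off-target, since that proposition concerns the DeTurck trick for short-time existence of AHCF rather than a slice for the diffeomorphism action on K\"ahler-Einstein structures; what you actually want is an Ebin-type slice for metrics (or Koiso's own divergence-free gauge), but the idea is the same. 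With that adjustment your plan would constitute a correct proof.
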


In order to make use of Koiso's Theorem we employ a theorem of Tian and Todorov.

\begin{thm}\label{Tian and Todorov}  (Tian \cite{ComplexDef1} and Todorov \cite{ComplexDef2}) Let $(M,J)$ be a closed Calabi-Yau manifold. Then $J$ is unobstructed.
\end{thm}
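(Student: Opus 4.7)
The plan is to solve the Maurer--Cartan equation for deformations of the complex structure, using the holomorphic volume form coming from the Calabi--Yau condition to kill the obstructions. Recall that (formal) deformations of $J$ correspond to $\phi(t) = \sum_{k \geq 1} \phi_k\, t^k$ with $\phi_k \in A^{0,1}(T^{1,0}M)$ satisfying
\begin{equation*}
\bar\partial \phi + \tfrac{1}{2}[\phi,\phi] = 0,
\end{equation*}
where $[\cdot,\cdot]$ is the Schouten bracket. I would fix an infinitesimal deformation $\phi_1$, $\bar\partial$-harmonic (so automatically satisfying $\dot{N}(\dot{J})=0$ in the sense of the definition preceding Koiso's theorem), and then construct $\phi_2, \phi_3, \dots$ inductively. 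At order $k \geq 2$ one must find $\phi_k$ with
\begin{equation*}
\bar\partial \phi_k \;=\; -\tfrac{1}{2}\!\!\sum_{i+j=k}\!\! [\phi_i, \phi_j].
\end{equation*}
The right-hand side is automatically $\bar\partial$-closed, and the obstruction lies in $H^{0,1}(M, T^{1,0}M)$.

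The key input is the Calabi--Yau condition, which provides a nowhere-vanishing holomorphic $(n,0)$-form $\Omega$. Contraction with $\Omega$ defines a pointwise isomorphism $\iota_\Omega \colon T^{1,0}M \xrightarrow{\sim} \Omega^{n-1,0}M$, and hence an isomorphism of $\bar\partial$-complexes $A^{0,\bullet}(T^{1,0}M) \cong A^{n-1,\bullet}$. The core calculation I would carry out is the Tian--Todorov identity: for $\varphi, \psi \in A^{0,1}(T^{1,0}M)$,
\begin{equation*}
\iota_\Omega [\varphi,\psi] \;=\; \partial\bigl(\varphi \lrcorner (\psi \lrcorner \Omega)\bigr) \;-\; \varphi \lrcorner \partial(\psi \lrcorner \Omega) \;-\; \psi \lrcorner \partial(\varphi \lrcorner \Omega),
\end{equation*}
so that whenever $\partial(\varphi \lrcorner \Omega) = \partial(\psi \lrcorner \Omega) = 0$, the bracket $[\varphi,\psi]$ is $\partial$-exact under the identification $\iota_\Omega$.

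Next, I would impose the gauge fixing $\bar\partial^{*}\phi_k = 0$ and exploit the $\partial\bar\partial$-lemma on the K\"ahler manifold $(M,\tilde\omega,J)$. By induction, if each $\phi_i$ ($i<k$) satisfies the transverse gauge $\partial(\phi_i \lrcorner \Omega) = 0$, then the Tian--Todorov identity above shows that $\iota_\Omega\!\!\sum_{i+j=k}[\phi_i,\phi_j]$ is both $\bar\partial$-closed and $\partial$-exact; the $\partial\bar\partial$-lemma then forces it to be $\partial\bar\partial$-exact, hence $\bar\partial$-exact in an explicit way. This gives the existence of $\phi_k$ with the required properties, and one verifies that $\phi_k$ also satisfies $\partial(\phi_k \lrcorner \Omega)=0$, continuing the induction.

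The remaining step is convergence: the formal power series $\phi(t)$ must be shown to converge in $C^{k,\alpha}$ (or some Tame Fr\'echet topology) for $t$ small. This is the standard Kodaira--Nirenberg--Spencer-type estimate, using elliptic regularity for the Green operator $G_{\bar\partial}$ associated to the $\bar\partial$-Laplacian of $\tilde\omega$, together with multiplicative bounds on the Schouten bracket in H\"older norms; majorizing by a geometric series completes the argument. The main obstacle is the Tian--Todorov identity itself, since it requires a careful bookkeeping of contractions and the interplay between $\partial$, $\bar\partial$, and the Schouten bracket; once that identity is in hand the obstruction-killing and convergence are comparatively routine.
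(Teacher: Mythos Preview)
The paper does not prove this theorem; it is quoted from the literature (Tian \cite{ComplexDef1}, Todorov \cite{ComplexDef2}) and used as a black box in the proof of Lemma~\ref{tangent space lemma}. So there is no ``paper's own proof'' to compare against. What you have written is, in outline, the standard Tian--Todorov argument from those original references: solve Maurer--Cartan order by order, use the holomorphic volume form to transport the problem to $A^{n-1,\bullet}$, invoke the Tian--Todorov identity to make the quadratic terms $\partial$-exact, and then apply the $\partial\bar\partial$-lemma together with Kodaira--Spencer--Nirenberg estimates for convergence.

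One correction: the obstruction at order $k$ is the class of $-\tfrac{1}{2}\sum_{i+j=k}[\phi_i,\phi_j]$, which lives in $A^{0,2}(T^{1,0}M)$, so the obstruction space is $H^{0,2}(M,T^{1,0}M)$, not $H^{0,1}(M,T^{1,0}M)$ as you wrote. The latter is the space of infinitesimal deformations, not the obstruction space. Apart from this slip, your sketch is correct and matches the original proofs.
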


Next we will describe the tangent space of Calabi-Yau structures at $(\til{\omega},\til{J})$. Using the above two theorems we prove that the kernel of $\LL$ is isomorphic to the tangent space of Calabi-Yau structures at $(\til{\omega},\til{J})$. Let $\mathcal{U}$ denote the space of Calabi-Yau structures near $(\til{\omega},\til{J})$ modulo diffeomorphism. 

\lemma \label{tangent space lemma} Let $(M, \til{\omega}, \til{J})$ be a closed Calabi-Yau manifold, then $T_{(\til{\omega},\til{J})}\mathcal{U} \cong \Ker \LL$.

\proof Let $(\omega(a),J(a))$ be a one-parameter family of unit volume almost hermitian structures and write $\da \big|_{a = 0}(\omega(a),J(a)) = (\dot{\omega},\dot{J})$. First since Calabi-Yau structures are static under the system (\ref{flow of w,J}), we have $T_{(\til{\omega},\til{J})}\mathcal{U} \subseteq \Ker \LL$. To prove $\Ker \LL \subseteq T_{(\til{\omega},\til{J})}\mathcal{U}$, let $(\dot{\omega},\dot{J}) \in \Ker \LL$.\\
\indent We make the following claim. \emph{If $\dot{J} \in \ker \dot{\GG}$ then $\dot{J} \in \ker \dot{N}$.}
To see this, first notice that from (\ref{linearization of G}) and using that the scalar curvature $s_{\til{g}} = 0$, if $\dot{J} \in \ker \dot{\GG}$, then $\Delta_{\overline{\del}}\dot{J} = 0$. By integrating we see that $\overline{\del} \dot{J} = 0$. On the other hand, in coordinates, the Nijenhuis tensor is written

	$$N_{j k}^{i} = J_j^{p} \del_{p}J_k^{i} - J_k^p\del_pJ_j^i - J_p^i\del_jJ_k^p + J_p^i\del_kJ_j^p.$$
	
\noindent Hence,
\begin{align*}
	\dot{N}_{j k}^{i} &= \dot{J}_j^{p} \del_{p}J_k^{i} + J_j^{p} \del_{p}\dot{J}_k^{i} - \dot{J}_k^p\del_pJ_j^i - J_k^p\del_p\dot{J}_j^i - \dot{J}_p^i\del_jJ_k^p - J_p^i\del_j\dot{J}_k^p + \dot{J}_p^i\del_kJ_j^p + J_p^i\del_k\dot{J}_j^p.
\end{align*}
Now each of the terms above of the form $\dot{J}*\del J$ can be written as $\dot{J}*\del J = \dot{J}( \nabla J + \Gamma *J)$. So using normal, complex coordinates (with respect to the Calabi-Yau structure $(\til{g},\til{J})$), at a point $p \in M$, we have that each of these terms vanish. Here we also made use of the fact that when $(\til{g},\til{J})$ is K\"{a}hler the Chern connection coincides with the Levi-Civita connection and so $\til{J}$ is parallel with respect to the connection.
Next, since $\dot{J} \in \Lambda^{0,1} \otimes T^{1,0}$ in these normal, complex coordinates at $p \in M$, we have,
\begin{align*}
	\dot{N}_{\bj \bk}^{i} &= J_{\bj}^{\bp} \del_{\bp}\dot{J}_{\bk}^{i} - J_{\bk}^{\bp}\del_{\bp}\dot{J}_{\bj}^i - J_p^i\del_{\bj}						\dot{J}_{\bk}^p + J_p^i\del_{\bk}\dot{J}_{\bj}^p = 0.
\end{align*}
This proves the claim.

By Theorem \ref{Tian and Todorov} there exists a path of complex structures ${J}(a)$ where $J(0) = {J}$ and $\frac{d}{da} J \big|_{a=0} = \dot{J}$. Next, using (\ref{linearization of F}) we have that $\dot{\omega} \in \ker \dot{\FF}$ implies that $\Delta_d\dot{\omega} = 0$, that is $\dot{\omega}$ is harmonic. Therefore, by the Calabi-Yau Theorem (\cite{Aub1}, \cite{Yau1}, \cite{Yau2}, also see Theorem 2.29 in \cite{Chow1}) $\omega(a)$ is a variation through K\"{a}hler metrics such that $[\omega(a)] = [\omega_{KE}(a)]$, where $\omega_{KE}(a)$ is Ricci-flat. Moreover by the Hodge Decomposition Theorem there is a unique harmonic representative in each cohomology class. Hence $\omega(a) = \omega_{KE}(a)$ and we have that $\dot{\omega}$ arises as a variation through Calabi-Yau metrics.
\qed
 
\medskip
\indent Notice that $\Lambda^2(M) \times \End(TM)$ is an affine space which can be viewed as a vector space by taking $(\til{\omega},\til{J})$ to be the origin. Throughout this section we will view $\Lambda^2(M) \times \End(TM)$ as a vector space. Let $\pi_0 : \Lambda^2(M) \times \End(TM) \rightarrow \Ker \LL$ be the projection onto the kernel of $\LL$.\\  
\indent Let $(\til{\omega},\til{J})$ denote the Calabi-Yau structure from Theorem \ref{dynamic stability}. Roughly speaking, we will next prove that there exists a better Calabi-Yau structure $(\omega_I,J_I)$; in the sense that the solution $(\omega(t),J(t))$ to VNAHCF exponentially converges to $(\omega_I,J_I)$ on an interval $I$ (see Theorem \ref{new KE structure} and Lemma \ref{exponential decay}). Throughout this section $\rho_I(t) \doteq (\omega(t) - \omega_I,J(t) - J_I)$ will quantify the distance the solution is from this new Calabi-Yau structure. As above let $\rho(t) = (\omega(t) - \til{\omega},J(t) -  \til{J})$. Notice that we may view both $\rho(t)$ and $\rho_I(t)$ as elements of $\Lambda^2(M) \times \End(TM)$.\\
\indent As in Section \ref{linstab} we have to deal with the non-linearity of the space of almost hermitian structures modulo diffeomorphism denoted $\mathcal{C}$. Notice that we may write $\rho_I(t) = \rho(t) - \til{\rho}_I$ where $\til{\rho}_I \doteq (\omega_I - \til{\omega},J_I - \til{J})$. From Lemma \ref{psi is quadratic/cubic} associated to $\rho(t)$ we have $\psi(t) \in T_{(\til{\omega},\til{J})}\mathcal{C}$ and analogously associated to $\til{\rho}_I$ we have $\til{\psi}_I \in T_{(\til{\omega},\til{J})}\mathcal{C}$. Hence associated to $\rho_I(t)$ we have ${\psi}_I(t) \in T_{(\til{\omega},\til{J})}\mathcal{C}$ defined by
\begin{align*}
	\psi_I(t) \doteq \psi(t) - \til{\psi}_I.
\end{align*}
Moreover, employing the same argument as in the proof of Lemma \ref{psi is quadratic/cubic} we have
\begin{align}
	|\psi_I(t)|_{C^k} \leq |\rho_I(t)|_{C^k} \label{psi_I leq rho_I},
\end{align}
\begin{align}
	|\rho_I(t)|_{L^2} \leq |\psi_I(t)|_{L^2} + C|\psi_I(t)|_{L^2}^2 \label{rho_I leq psi_I L2}
\end{align}
and
\begin{align}
	|\rho_I(t)|_{C^k} \leq |\psi_I(t)|_{C^k} + C|\psi_I(t)|_{C^k}^2 \label{rho_I leq psi_I}.
\end{align}
Similarly by the proof of Lemma \ref{the flow of psi} we have
\begin{align}
	\dt \psi_I(t) &= \LL (\psi_I(t)) + A((\til{\omega},\til{J}),\psi_I(t)) \label{evolution of psi_I(t)}
\end{align}
where
\begin{align}
	|A((\til{\omega},\til{J}),\psi_I(t))|_{C^k} &\leq C(|\psi_I(t)|_{C^k}|\nabla^2\psi_I(t)|_{C^k} + |\nabla \psi_I(t)|_{C^k}^2). 		\label{error estimate of psi_I(t)}
\end{align}

\indent Next we will use the identification of the kernel of $\LL$ and the tangent space of Calabi-Yau structures at $(\til{\omega},\til{J})$, from Lemma \ref{tangent space lemma}, to find a new Calabi-Yau structure denoted $(\omega_I,J_I)$ such that $|\pi_0(\psi_I(t))|_{L^2}$ is small relative to $|\psi_I(t)|_{L^2}$. Furthermore we will show that the new Calabi-Yau structure is contained in a fixed neighborhood of the original Calabi-Yau structure $(\til{\omega},\til{J})$.

\begin{thm} \label{new KE structure} Given $t_0$ and $T > 0$, let $I = [t_0, t_0 + T]$. There exists $\delta(T,\til{g})$ such that if $\sup_{I}|\psi(t)|_{C^k}<\delta$ with $k \geq 2$, then there exists a Calabi-Yau structure $(\omega_I,J_I)$ with the following properties:
\begin{enumerate}
	\item$|\pi_0(\psi_I)|_{L^2(\til{g})}^2\leq \frac{1}{4}|\psi_I|_{L^2(\til{g})}^2$ on $I$
	\item$|(\omega_I - \til{\omega},J_I-\til{J})|_{C^k}\leq C \sup_{I}|\psi|_{C^k}.$
\end{enumerate}

\end{thm}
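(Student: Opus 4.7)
The plan is to parametrize nearby Calabi--Yau structures by elements of $\Ker \LL$ using Koiso's Theorem, then select the one whose tangent representative cancels $\pi_0(\psi(t_0))$.

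Invoking Theorems \ref{Koiso's Theorem}, \ref{Tian and Todorov}, and Lemma \ref{tangent space lemma}, I would choose a smooth chart $\Phi \colon V \subset \Ker \LL \to \mathcal{U} \subset \Lambda^2(M) \times \End(TM)$ with $\Phi(0) = (\til{\omega}, \til{J})$ and $d\Phi_0 = \mathrm{id}$. Applying Lemma \ref{psi is quadratic/cubic} to the translation $\Phi(v) - (\til{\omega}, \til{J})$ produces a tangent approximation $\til{\psi}(v) \in T_{(\til{\omega},\til{J})}\mathcal{C}$ satisfying $\til{\psi}(v) = v + O(|v|^2)$, with the quadratic remainder contained in $\pi_0^\perp$. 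Consequently $v \mapsto \pi_0(\til{\psi}(v))$ is a local diffeomorphism of $\Ker \LL$ at $0$, so by the Inverse Function Theorem every sufficiently small $v_* \in \Ker \LL$ is attained. Setting $v_* \doteq \pi_0(\psi(t_0))$ and letting $(\omega_I, J_I)$ be a representative of $\Phi(v(v_*))$ forces $\pi_0(\til{\psi}_I) = \pi_0(\psi(t_0))$, and hence $\pi_0(\psi_I(t_0)) = 0$. Part $(2)$ then follows from smoothness of $\Phi$ together with equivalence of norms on the finite--dimensional space $\Ker \LL$:
\begin{equation*}
 |(\omega_I-\til{\omega}, J_I - \til{J})|_{C^k} \leq C|v|_{C^k} \leq C|v_*|_{C^k} \leq C|\psi(t_0)|_{C^k} \leq C \sup_I |\psi|_{C^k}.
\end{equation*}

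For Part $(1)$ the crucial fact is that $\LL$, as visible from (\ref{linearization of F}) and (\ref{linearization of G}), is $L^2(\til g)$-self-adjoint, so $\Ker \LL \perp \Img \LL$ and therefore $\pi_0 \circ \LL = 0$. Applying $\pi_0$ to the evolution (\ref{evolution of psi_I(t)}) and integrating from $t_0$ yields
\begin{equation*}
 \pi_0(\psi_I(t)) = \int_{t_0}^{t} \pi_0\bigl(A((\til{\omega}, \til{J}), \psi_I(s))\bigr)\, ds,
\end{equation*}
and the quadratic estimate (\ref{error estimate of psi_I(t)}) gives $|\pi_0(\psi_I(t))|_{L^2(\til g)} \leq C T \delta^2$. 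The main obstacle will be matching this bound against $\tfrac14 |\psi_I(t)|_{L^2}^2$, since $|\psi_I|$ might become small wherever $\psi$ is itself very close to $\mathcal{U}$. To handle this, I would exploit that $\pi_0^\perp(\til{\psi}_I) = O(|v_*|^2) = O(\delta^2)$, giving the lower bound $|\pi_0^\perp(\psi_I(t))|_{L^2} \geq |\pi_0^\perp(\psi(t))|_{L^2} - C\delta^2$, and then shrink $\delta = \delta(T, \til g)$ so that the quadratic error on the left is dominated by the $\pi_0^\perp$ part of $\psi_I$ on the right. In the degenerate regime where $\psi(t)$ is uniformly close to $\mathcal{U}$ on $I$, one refines the construction by replacing $t_0$ with a midpoint--of--$I$ choice or by iterating the Inverse Function Theorem against a time average of $\pi_0(\psi(\cdot))$, so that both sides of the target inequality are simultaneously of quadratic order in $\delta$ and the constant in front of $|\pi_0^\perp|$ beats $CT$; choosing $\delta$ accordingly then yields the required ratio uniformly in $t \in I$.
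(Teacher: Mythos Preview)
Your construction of $(\omega_I, J_I)$ and proof of Part $(2)$ match the paper's: both use Koiso's theorem to identify $\Ker\LL$ with $T_{(\til{\omega},\til{J})}\mathcal{U}$, invoke the Inverse Function Theorem to select the Calabi--Yau structure whose kernel projection cancels $\pi_0(\psi(t_0))$, and read off the $C^k$ bound from smoothness of the chart.

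For Part $(1)$, your direct approach---applying $\pi_0$ to the evolution (\ref{evolution of psi_I(t)}) and integrating---is more elementary than the paper's, but the final step has a genuine gap. You obtain an \emph{absolute} bound $|\pi_0(\psi_I(t))|_{L^2} \leq CT\delta^2$, whereas the target inequality is \emph{relative}: $|\pi_0(\psi_I)|_{L^2}^2 \leq \tfrac14|\psi_I|_{L^2}^2$. Since $|\psi_I|_{L^2}$ can itself be arbitrarily small compared to $\delta$ (precisely when the solution is already very close to some Calabi--Yau structure), no choice of $\delta$ depending only on $T$ and $\til g$ can close this comparison. Your proposed fixes---shifting $t_0$ to the midpoint, time-averaging $\pi_0(\psi)$, iterating the Inverse Function Theorem---still produce absolute bounds of size $O(\delta^2)$ and do not resolve the issue.

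The paper secures the relative estimate through two ingredients you are missing. First, instead of bounding $|A|$ by $C\delta^2$, it decomposes $\psi_I = \pi^I(\psi_I) + \xi_I$ where $\pi^I(\psi_I)$ solves the \emph{linear} equation $\partial_t = \LL$ with the same initial data (hence has no $\Ker\LL$-component for all $t$), so that $|\pi_0(\psi_I)| \leq |\xi_I|$ and $|\xi_I| \leq C|\psi_I|$ pointwise. The evolution $\partial_t\xi_I = \LL\xi_I + A$ together with integration by parts then gives $\partial_t|\xi_I|_{L^2}^2 \leq C\delta|\psi_I|_{L^2}^2$: one factor of $\delta$ comes from $|\nabla^2\psi_I|_{C^0}$, and two factors of $\psi_I$ remain in $L^2$. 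Second, since $\partial_t\psi_I$ is uniformly bounded in terms of $\delta$ on $I$, the norms $|\psi_I(s)|_{L^2}^2$ are uniformly comparable for $s\in I$, so $\int_{t_0}^t|\psi_I(s)|_{L^2}^2\,ds \leq C(t-t_0)|\psi_I(t)|_{L^2}^2$. Together these give $|\pi_0(\psi_I(t))|_{L^2}^2 \leq |\xi_I(t)|_{L^2}^2 \leq C\delta T|\psi_I(t)|_{L^2}^2$, and taking $\delta < (4CT)^{-1}$ finishes. Your $\pi_0$-integration could in principle be repaired with these same two ideas (exploiting finite-dimensionality of $\Ker\LL$ to integrate by parts against smooth kernel elements and get $|\pi_0(A)|_{L^2} \leq C\delta|\psi_I|_{L^2}$), but as written the argument is incomplete.
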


\proof First by Theorem \ref{Koiso's Theorem} we know that $\mathcal{U}$ has a manifold structure near $(\til{\omega},\til{J})$ and moreover by Lemma \ref{tangent space lemma} we have $\Ker \LL \cong T_{(\til{\omega},\til{J})}\mathcal{U}$.

\indent By identifying $\Ker \LL$ and $T_{(\til{\omega},\til{J})}\mathcal{U}$ we will view $(\til{\omega},\til{J})$ as the origin of $\Ker \LL$. Let $\Phi = \Phi_{(\til{\omega},\til{J})} :\Ker \LL   \rightarrow \mathcal{U}$ denote the exponential map at $(\til{\omega},\til{J})$. Now since $\mathcal{D}_{(\til{\omega},\til{J})}\Phi$ is the identity map, the inverse function theorem may be applied to $\Phi$. By the inverse function theorem there exists a neighborhood $V \subset \Ker \LL$ of $(\til{\omega},\til{J})$ on which the exponential map is invertible.\\
\indent Let $\delta_1$ be small enough so that 
\begin{align}
	|\pi_0(\psi(t_0))|_{C^k} < \delta_1 \text{ implies that } \pi_0(\psi(t_0)) \in V. \label{c1<0 delta1}
\end{align}	
Notice that if $\sup_I|\psi|_{C^k}  < \delta_1$ then since $t_0 \in I$, it is clear that $|\pi_0(\psi(t_0))|_{C^k} < \delta_1$. Hence by the inverse function theorem there is a Calabi-Yau structure $(\omega_I,J_I) \in \mathcal{U}$ such that
\begin{align}
	(\Phi|_{V})^{-1}\big((\omega_I,J_I)\big) = \pi_0(\psi(t_0)). \label{inverse function theorem}
\end{align} 
Applying $\Phi$ to each side of (\ref{inverse function theorem}), it follows from the inverse function theorem that there exists a constant $C$ so that
\begin{align*}
	|(\omega_I-\til{\omega}, J_I-\til{J})|_{C^k} &\leq C |\pi_0(\psi(t_0))|_{C^k}\\
	&\leq C\sup_{I}|\psi|_{C^k}.
\end{align*}
This proves $(2)$.\\
\indent Next, using that $(\Phi|_{V})^{-1}\big((\omega_I,J_I)\big) = \pi_0\big((\omega_I - \til{\omega}, J_I - \til{J})\big)$, from (\ref{inverse function theorem}) we have
\begin{align}
	\pi_0(\psi_I(t_0))=0.\label{kernel}
\end{align}
In other words, there exists a Calabi-Yau structure $(\omega_I,J_I)$ such that at time $t_0$, $\psi_I(t)$ is orthogonal, with respect to $L^2(\til{g})$, to $\Ker \LL$.\\ 
\indent To prove $(1)$ we will carefully study the evolution of $\psi_I(t)$ starting at $t = t_0$ in order to get $L^2$ estimates on $\pi_0(\psi_I)$. First let $||\psi_I||_{M \times I}\dot{=}\int_{I}|\psi_I|_{L^2(\til{g})}$ denote the $L^2$ norm on $M \times I$. Let $\{ B_i \}$ be an orthonormal basis, with respect to $L^2(\til{g})$, of $T_{(\til{\omega},\til{J})}\mathcal{C}$ determined by the eigenspace decomposition of $\LL$. Then there exist constants $c_i$ so that $\{ c_iB_ie^{\lambda_it} \}$ is an orthonormal basis, with respect to $|| \cdot ||_{M \times I}$, of $\Ker \left( \dt - \LL \right) \big|_{M \times I}$ where $\lambda_i$ is the eigenvalue associated to $B_i$.\\ 
\indent We let $\pi^I\big(\psi_I(t)\big)$ denote the projection of $\psi_I(t)$ onto $\Ker \left( \dt - \LL \right)\big|_{M \times I}$. In other words,
\begin{align}
	\dt \pi^I(\psi_I(t))= \LL(\pi^I(\psi_I(t))). \label{evolution of pi^Ipsi_I}
\end{align}
From (\ref{kernel}), we have $\pi^I(\psi_I(t_0))= \sum_{\lambda_i \neq 0}k_i B_i$. It then follows that
\begin{align}
	\pi^I(\psi_I(t)) = \sum_{\lambda_i \neq 0} k_{i}B_{i}e^{\lambda_i(t - t_0)}. \label{pi^Ipsi_I(t)}
\end{align} 
We write
\begin{align}
	\psi_I(t)=\pi^I(\psi_I(t))+\xi_I(t). \label{psi_I(t)2}
\end{align}
\indent Since $\pi^I\big(\psi_I(t)\big)$ is orthogonal to $\Ker \LL$ on $I$, it follows that for $t \in I$,
\begin{align}
	|\pi_0(\psi_I(t))| \leq |\xi_I(t)|. \label{kernel estimate}
\end{align}
Therefore to obtain estimates on $\pi_0\big( \psi_I(t)\big)$ we compute the evolution of $\xi_I(t)$. Moreover, from (\ref{pi^Ipsi_I(t)}) and (\ref{psi_I(t)2}), since $\lambda_i<0$ is bounded away from 0 for all $i$, we have that $\xi_I(t)$ converges exponentially to $\psi_I(t)$. Therefore there is a uniform constant $C$ so that on $I$, 
\begin{align}
	|\xi_I(t)| \leq C|\psi_I(t)|.\label{epsilon bounds}
\end{align} 

\indent To compute the evolution of $\xi_I(t)$ we compare two evolution equations for $\psi_I(t)$. From (\ref{evolution of psi_I(t)}), $\psi_I(t)$ satisfies $\dt\psi_I(t)= \LL(\psi_I(t))+A((\til{\omega},\til{J}),\psi_I)$ and hence,
\begin{align}
	\dt \psi_I(t)=\LL(\pi^I(\psi_I(t)))+\LL(\xi_I(t))+A((\til{\omega},\til{J}),\psi_I(t)). \label{psi_I 1}
\end{align}

Furthermore, $\pi^I(\psi_I(t))$ satisfies (\ref{evolution of pi^Ipsi_I}) and so
\begin{align}
	\dt\psi_I(t) = \dt\big(\pi^I(\psi_I(t))+\xi_I(t)\big) = \LL(\pi^I(\psi_I(t)))+\dt\xi_I(t). \label{psi_I 2}
\end{align}
Combining equations (\ref{psi_I 1}) and (\ref{psi_I 2}) we have that $\xi_I(t)$ evolves by 
\begin{align}
	\dt\xi_I(t) = \LL(\xi_I(t))+A((\til{\omega},\til{J}),\psi_I(t)).\label{evolution of epsilon} 
\end{align}

\indent Recall that $\LL$ is negative semi-definite; and so by (\ref{evolution of epsilon}), on $I$ we have
\begin{align*}
	\dt |\xi_I(t)|_{L^2(\til{g})}^2=2\int_M\bigg\langle \dt\xi_I(t),\xi_I(t)\bigg\rangle_{\til{g}}dvol_{\til{g}} \leq 2\int_MA\left((\til{\omega},\til{J}),\psi_I(t)\right)*\xi_I(t).
\end{align*}
Now using the bounds on $A$ from (\ref{error estimate of psi_I(t)}), the same computation as (\ref{2.14}) shows that 
\begin{align*}
	\dt |\xi_I(t)|_{L^2(\til{g})}^2 \leq C_1\int_M |\nabla^2\psi_I||\psi_I||\xi_I|.
\end{align*}
Hence by (\ref{epsilon bounds}), 
\begin{align}
	\dt |\xi_I(t)|_{L^2(\til{g})}^2 \leq C_2 \int_M |\nabla^2\psi_I||\psi_I|^2. \label{L2 evolution of xi}
\end{align}
\indent Next we assume $\sup_I|\psi(t)|_{C^k}<\delta$ with $k\geq 2$ and $\delta \leq \delta_1$ where $\delta_1$ is from (\ref{c1<0 delta1}). Using part $(2)$ of Theorem \ref{new KE structure} and the triangle inequality, from (\ref{L2 evolution of xi}) it follows that on I
\begin{align*}
	\dt |\xi_I(t)|_{L^2}^2 \leq C_3\delta |\psi_I(t)|_{L^2}^2.
\end{align*}

\indent Now since $\xi_I(t_0) = 0$, 
\begin{align}
	|\xi_I(t)|_{L^2(\til{g})}^2= \int_{t_0}^{t}\frac{\del}{\del s} \big|\xi_I(s)\big|_{L^2(\til{g})}^2ds \leq C_3\delta \int_{t_0}^{t} |\psi_I(s)|_{L^2(\til{g})}^2ds. \label{L2 xi estimate} 
\end{align}
Notice that since $\dt \psi_I(t) = \dt \psi(t)$ is second order in $\psi(t)$ and $\sup_{I}|\psi(t)|_{C^k}<\delta$ with $k \geq 2$, $\dt \psi_I(t)$ is uniformly bounded in terms of $\delta$ and hence each $\psi_I(s)$ for $s \in I$ is uniformly equivalent. Therefore
\begin{align*}
	|\pi_0(\psi_I(t))|_{L^2(\til{g})}^2 \leq |\xi_I(t)|_{L^2(\til{g})}^2 \leq C_4\delta \int_{t_0}^{t}|\psi_I(t)|_{L^2(\til{g})}^2ds = C_5\delta (t - t_0)|\psi_I(t)|_{L^2(\til{g})}^2,
\end{align*}
where the first inequality follows from (\ref{kernel estimate}) and the second is from (\ref{L2 xi estimate}).
To finish the proof we choose $\delta$ small enough so that both $C_5T\delta < \frac{1}{4}$ and $\delta \leq \delta_1$ hold. 
\qed

\medskip
We will now use part $(1)$ of Theorem \ref{new KE structure} to prove $L^2$ exponential decay of $\psi_I(t)$ on $I$. Notice that by (\ref{rho_I leq psi_I}) this implies exponential decay of $\rho_I(t) = (\omega(t)-\omega_I, J(t) - J_I)$ on $I$.

\lemma \label{exponential decay}Let $I$ and $(\omega_I,J_I)$ be as in Theorem \ref{new KE structure}. There exists $\beta>0$ such that if $|\psi|_{C^2}<\beta$ on $I$, then
$$\sup_{\left[t_0+\frac{1}{2}T,t_0+T\right]}\int_M|\psi_I|^2dvol_{\til{g}} \leq e^{-\frac{T\lambda}{2}}\sup_{\left[t_0,t_0+\frac{1}{2}T\right]}\int_M|\psi_I|^2dvol_{\til{g}}$$ 
where $\lambda = min\{ |\lambda_i|: \lambda_i$ is a non-zero eigenvalue of $\LL$ $\} >0$.

\proof 

We compute the evolution of $|\psi_I|_{L^2}^2$ and as in (\ref{evolution of psi_I(t)}) we have
\begin{align}
	\dt \int_M|\psi_I|^2dvol_{\til{g}} = 2\int_M\langle \LL(\psi_I),\psi_I\rangle dvol_{\til{g}}+\int_MA\left((\til{\omega},\til{J}),\psi_I(t)\right)*\psi_Idvol_{\til{g}}. \label{L2 evolution of psi_I}
\end{align}

\indent Recall that by the definition of $\pi_0$, $\psi_I - \pi_0(\psi_I)$ is the component of $\psi_I$ orthogonal to the kernel of $\LL$. Hence by the definition of $\lambda$,
\begin{align}
	2\int_M\langle \LL(\psi_I),\psi_I\rangle dvol_{\til{g}} &\leq -2\lambda|\psi_I-\pi_0(\psi_I)|_{L^2}^2 \label{first term in L2 	psiI1}\\
	&\leq -2\lambda \left(|\psi_I|_{L^2}^2 - |\pi_0(\psi_I)|_{L^2}^2 \right).\label{first term in L2 psiI1'}
\end{align}

\indent Let $\delta$ be the constant from Theorem \ref{new KE structure}. By Theorem \ref{new KE structure} part (1), if $\sup_I|\psi(t)|_{C^2} < \beta_1$ with $\beta_1 \leq \delta$, then from (\ref{first term in L2 psiI1}) and (\ref{first term in L2 psiI1'}) it follows that
\begin{align}
	2\int_M\langle \LL(\psi_I),\psi_I\rangle dvol_{\til{g}} &\leq - \frac{3}{2}\lambda |\psi_I|_{L^2}^2. \label{first term in L2 psiI2}
\end{align}
\indent Next consider the term $\int A * \psi_I$ from (\ref{L2 evolution of psi_I}). We use the estimate on $A$ from (\ref{error estimate of psi_I(t)}) to bound $\int A * \psi$. Notice that if $|\psi_I|_{C^2}<\beta_3$, then as in (\ref{2.15}),
\begin{align}
	\int_MA\left((\til{\omega},\til{J}),\psi_I(t)\right)*\psi_Idvol_{\til{g}} \leq C\beta_3|\psi_I|_{L^2}^2. \label{second term in L2 psiI}
\end{align}
Now we choose $\beta_3$ small enough so that 
\begin{align}
	C\beta_3 < \frac{\lambda}{2}. \label{beta3}
\end{align}
Let $\beta_2$ be sufficiently small so that $|\psi|_{C^2} < \beta_2$ on $I$ implies that $|\psi_I|_{C^2} < \beta_3$ on $I$. Notice that this can be done using the triangle inequality and part $(2)$ of Theorem \ref{new KE structure}.\\
\indent Finally we choose $\beta = min \{ \beta_1, \beta_2 \} $. Combining (\ref{L2 evolution of psi_I}), (\ref{first term in L2 psiI2}), (\ref{second term in L2 psiI}) and (\ref{beta3}) it follows that if $|\psi|_{C^2} < \beta$ on $I$, then

\begin{align}
	\dt \int_M|\psi_I|^2dvol_{\til{g}} \leq -\lambda \int_M|\psi_I|^2dvol_{\til{g}}. \label{L2 psi_I bound}
\end{align}
Integrating from $t_0$ to $t$ gives
\begin{align}
	|\psi_I(t)|_{L^2}^2 \leq e^{-\lambda(t - t_0)}|\psi_I(t_0)|_{L^2}^2. \label{L2 decay of psi_I}
\end{align}
Finally since (\ref{L2 psi_I bound}) implies that $|\psi_I(t)|_{L^2}^2$ is decreasing, plugging $t_0 + \frac{1}{2}T$ into (\ref{L2 decay of psi_I}) proves the lemma.
\qed

\medskip
This gives exponential $L^2$ decay of $\psi_I(t)$ on $I$. Next we prove a general result about parabolic flows (cf. Lemma 8.8 in \cite{HCF}). The following lemma says roughly that exponential decay at a later time implies exponential decay at an earlier time.
\lemma \label{exponential decay later implies exponential decay earlier}There exists $\nu>0$ so that if $\kappa$ solves the parabolic flow equation $\dt \kappa = \LL (\kappa) + A(\kappa)$ and $|\kappa(t)|_{C^k}<\nu$ for all $t \in [0,t_0 + T]$, then
\begin{align}
	\sup_{\left[t_0+\frac{1}{2}T,t_0+T\right]}\int_M|\kappa|^2 \leq e^{-\frac{T\lambda}{2}}\sup_{\left[t_0,t_0+\frac{1}{2}T\right]}\int_M|\kappa|^2 \label{exp decay later}
\end{align}
implies that 
\begin{align}
	\sup_{\left[t_0,t_0+\frac{1}{2}T\right]}\int_M|\kappa|^2 \leq e^{-\frac{T\lambda}{2}}\sup_{\left[t_0-\frac{1}{2}T,t_0\right]}\int_M|\kappa|^2. \label{exp decay earlier}
\end{align}
\proof Suppose, by way of contradiction, that the lemma fails to hold. Then there is a sequence $\nu_i\rightarrow0$ with $\kappa_i(t)$ solving $\dt \kappa_i = \LL (\kappa_i) + A(\kappa_i)$ and $|\kappa_i|_{C^k}<\nu_i$ on $[0,t_0+T]$ and moreover (\ref{exp decay later}) holds but (\ref{exp decay earlier}) does not. Parabolically rescale the solution $\kappa_i$; that is let $\til{\kappa}(t)_i=\nu_i^{-1}\kappa_i(\nu_it)$. Now for all $i$, $|\til{\kappa}_i|_{C^k}<1$ on $[0,\nu_i^{-1}(t_0 + T)] \supset [0, t_0 + T]$ and so by compactness we get a convergent subsequence $\til{\kappa}(t)_i\rightarrow \til{\kappa}(t)_{\infty}$ on $[0,t_0 + T]$ as $i\rightarrow \infty$. Now since $\til{\kappa}_i$ solves $\dt \til{\kappa}_i = \LL (\nu_i\til{\kappa}_i) + A(\nu_i\til{\kappa}_i)$ and $A(\kappa)$ is quadratic in $\kappa$ this implies that $\til{\kappa}_{\infty}(t)$ solves the linear equation
\begin{align}
	\dt \til{\kappa}_{\infty} = \LL(\til{\kappa}_{\infty}). \label{evolution of kappa infinity}
\end{align}
Furthermore since (\ref{exp decay later}) and (\ref{exp decay earlier}) are scale invariant it follows that for $\til{\kappa}_{\infty}$ (\ref{exp decay later}) holds but (\ref{exp decay earlier}) does not. This is a contradiction.\\
\indent To see the contradiction, first notice that (\ref{evolution of kappa infinity}) implies that
\begin{align}
	\dt |\til{\kappa}_{\infty}|_{L^2}^2 \leq 0 \label{L2 kappa infinity dec}
\end{align} 
as $\LL$ is negative semi-definite. It then follows that 
\begin{align}
	|\til{\kappa}_{\infty}(t_0 + \tfrac{1}{2}T)|_{L^2}^2 &= \sup_{\left[t_0+\frac{1}{2}T,t_0+T\right]}|\til{\kappa}_{\infty}|_{L^2}^2  \label{exp decay kappa infinity 1} \\
	&\leq  e^{-\frac{T\lambda}{2}}\sup_{\left[t_0,t_0 + \frac{1}{2}T\right]}|\til{\kappa}_{\infty}|_{L^2}^2\label{exp decay kappa infinity 2}\\
	&= e^{-\frac{T\lambda}{2}}|\til{\kappa}_{\infty}(t_0)|_{L^2}^2,\label{exp decay kappa infinity 3}
\end{align}
where the inequality follows from (\ref{exp decay later}). As above, let $\{ B_i \}$ be an orthonormal basis, with respect to $L^2(\til{g})$, of $T_{(\til{\omega},\til{J})}\mathcal{C}$ determined by the eigenspace decomposition of $\LL$. We can now write $\til{\kappa}_{\infty}(t)$ with respect to this basis,
\begin{align}
	\til{\kappa}_{\infty}(t) = \til{\kappa}_{\infty}(t_0) \left( \sum_i B_ie^{\lambda_i(t - t_0)} \right). \label{kappa infinity (t)}
\end{align}
Notice that at time $t = t_0 + \frac{1}{2}T$ we have
\begin{align}
	|\til{\kappa}_{\infty}(t_0 + \tfrac{1}{2}T)|_{L^2}^2 = |\til{\kappa}_{\infty}(t_0)|_{L^2}^2 \left( \sum_i e^{T\lambda_i} \right). \label{kappa infinity (t0 + 1/2T)}
\end{align}
By combining (\ref{exp decay kappa infinity 1}), (\ref{exp decay kappa infinity 2}), (\ref{exp decay kappa infinity 3}) and (\ref{kappa infinity (t0 + 1/2T)}), it follows that
\begin{align}
	\sum_i e^{T\lambda_i} \leq e^{-\frac{T\lambda }{2}}. \label{exp decay rate}
\end{align}
From (\ref{kappa infinity (t)}) it follows that 
\begin{align}
	|\til{\kappa}_{\infty}(t_0 - \tfrac{1}{2}T)|_{L^2}^2 = |\til{\kappa}_{\infty}(t_0)|_{L^2}^2 \left( \sum_i e^{-T\lambda_i} \right). \label{kappa infinity (t0 - 1/2T)}
\end{align}
Finally using (\ref{kappa infinity (t0 - 1/2T)}) and the concavity of $f(x) = \frac{1}{x}$ we see that
\begin{gather}
\begin{split} \label{last inequality}
	|\til{\kappa}_{\infty}(t_0)|_{L^2}^2 &= |\til{\kappa}_{\infty}(t_0 - \tfrac{1}{2}T)|_{L^2}^2 \frac{1}{\left( \sum_i e^{-T\lambda_i} \right)}\\
	&\leq |\til{\kappa}_{\infty}(t_0 - \tfrac{1}{2}T)|_{L^2}^2 \left( \sum_i e^{T\lambda_i} \right) \\
	&\leq e^{-\frac{T\lambda }{2}}|\til{\kappa}_{\infty}(t_0 - \tfrac{1}{2}T)|_{L^2}^2.
\end{split}
\end{gather}
The last inequality in (\ref{last inequality}) follows from (\ref{exp decay rate}). Notice that the above inequality along with (\ref{L2 kappa infinity dec}) imply that (\ref{exp decay earlier}) holds. This is a contradiction. 
\qed

\medskip
\begin{cor}\label{exp decay cor} Given $T>0$ and $j \geq 1$ there exists $\alpha = \alpha(T,j) > 0$ such that if $|\psi(t)|_{C^2} < \alpha$ on $[0,(j + 1)T]$ then there exists a Calabi-Yau structure $(\omega_j,J_j)$ so that $\rho_j(t) = (\omega(t) - \omega_j,J(t) - J_j)$ satisfies the following exponential decay estimate:
$$|\rho_j(t)|_{L^2}^2 \leq Ce^{-\frac{\lambda t}{2}}$$
for $t \in [0,(j + 1)T]$ and a constant $C$ independent of $j$. 
\end{cor}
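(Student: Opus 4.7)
The plan is to apply Theorem \ref{new KE structure} exactly once, on the rightmost length-$T$ subinterval $I = [jT, (j+1)T]$, to produce the Calabi-Yau structure $(\omega_j, J_j)$; use Lemma \ref{exponential decay} to get a half-interval $L^2$ decay of $\psi_j$ on $I$; and then iterate Lemma \ref{exponential decay later implies exponential decay earlier} $2j$ times to walk this decay estimate backward to $[0, T/2]$, chaining the resulting bounds. A routine comparison with the target exponential then converts the discrete half-interval estimates into the pointwise bound $|\psi_j(t)|_{L^2}^2 \leq Ce^{-\lambda t/2}$, and (\ref{rho_I leq psi_I L2}) transfers this to $\rho_j$.

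\textbf{Choice of $\alpha$.} Let $\delta$, $\beta$, $\nu$ denote the thresholds furnished by Theorem \ref{new KE structure}, Lemma \ref{exponential decay}, and Lemma \ref{exponential decay later implies exponential decay earlier} respectively, and let $C_*$ be the constant in part $(2)$ of Theorem \ref{new KE structure}. I would take $\alpha = \alpha(T,j)$ small enough that $\alpha < \min(\delta, \beta)$ and $(1+C_*)\alpha < \nu$. Under the hypothesis $\sup_{[0,(j+1)T]}|\psi|_{C^k} < \alpha$, Theorem \ref{new KE structure} then produces $(\omega_j, J_j)$ with $|(\omega_j-\til{\omega}, J_j-\til{J})|_{C^k} \leq C_*\alpha$, so by the triangle inequality $|\psi_j|_{C^k} \leq (1+C_*)\alpha < \nu$ on all of $[0,(j+1)T]$. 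This is exactly the regularity hypothesis required to invoke the backward propagation lemma anywhere on this interval.

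\textbf{The iteration.} Write $I_k = [kT/2, (k+1)T/2]$ and $S_k = \sup_{I_k}|\psi_j|_{L^2}^2$. Lemma \ref{exponential decay} applied on $I$ yields $S_{2j+1} \leq e^{-T\lambda/2} S_{2j}$. Applying Lemma \ref{exponential decay later implies exponential decay earlier} to $\psi_j$ in turn with $t_0 = jT, \, jT-T/2, \, \ldots, \, T/2$ converts this into the chain $S_k \leq e^{-T\lambda/2} S_{k-1}$ for each $k = 1, \ldots, 2j+1$, and hence $S_k \leq e^{-kT\lambda/2} S_0$. For $t \in I_k$ with $k \geq 1$ one has $t \leq (k+1)T/2 \leq kT$, so $e^{-kT\lambda/2} \leq e^{-\lambda t/2}$, while the $k=0$ case absorbs at worst a fixed factor $e^{\lambda T/4}$. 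Hence
\begin{align*}
	|\psi_j(t)|_{L^2}^2 \leq \bigl(S_0 e^{\lambda T/4}\bigr) e^{-\lambda t/2} \quad \text{for all } t \in [0,(j+1)T],
\end{align*}
and $S_0 \leq ((1+C_*)\alpha)^2 \Vol(M)$ is bounded by a quantity independent of $j$. Invoking (\ref{rho_I leq psi_I L2}) and squaring finally yields $|\rho_j(t)|_{L^2}^2 \leq Ce^{-\lambda t/2}$ with $C$ independent of $j$.

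\textbf{Main obstacle.} The delicate point is that Lemma \ref{exponential decay later implies exponential decay earlier} requires $C^k$-smallness of its input on the \emph{entire} interval $[0,(j+1)T]$, not merely on the subinterval where the decay estimate is being derived; and it is $\psi_j$, not $\psi$, that we must control. Part $(2)$ of Theorem \ref{new KE structure} is precisely what makes this feasible: it bounds the displacement $(\omega_j, J_j) - (\til\omega, \til J)$ in $C^k$ by $\sup_I|\psi|_{C^k}$, so the hypothesis $|\psi|_{C^2} < \alpha$ globally forces $|\psi_j|_{C^k} < (1+C_*)\alpha$ globally as well. Without this explicit quantitative link between $\til\psi_j$ and $\psi$ the iteration would fail, since the size of the newly-chosen Calabi-Yau structure could in principle blow up independently of the hypothesis on $\psi$.
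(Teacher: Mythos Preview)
Your proposal is correct and follows essentially the same route as the paper: apply Theorem \ref{new KE structure} on the rightmost interval $[jT,(j+1)T]$ to produce $(\omega_j,J_j)$, invoke Lemma \ref{exponential decay} for the initial half-interval decay, then iterate Lemma \ref{exponential decay later implies exponential decay earlier} backward and chain the resulting geometric estimates, finally passing from $\psi_j$ to $\rho_j$ via (\ref{rho_I leq psi_I L2}). Your handling of the ``main obstacle'' is in fact slightly more explicit than the paper's own argument, which takes $\alpha=\min\{\delta,\beta,\nu\}$ and only at the end invokes part (2) of Theorem \ref{new KE structure} and the triangle inequality to control $|\psi_j|_{C^2}$; your preemptive choice $(1+C_*)\alpha<\nu$ makes the logic cleaner.
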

\proof First notice that by (\ref{rho_I leq psi_I L2}) it suffices to prove exponential decay of $\psi_j(t)$, the tangent vector associated to $\rho_j(t)$. We will prove exponential decay of $\psi_j(t)$ using the previous two lemmas and Theorem \ref{new KE structure}.\\
\indent Let $\delta$, $\beta$ and $\nu$ be the small constants from Theorem \ref{new KE structure}, Lemma \ref{exponential decay} and Lemma \ref{exponential decay later implies exponential decay earlier} respectively. In order to apply the above lemmas and Theorem \ref{new KE structure} we let $\alpha = min \{ \delta, \beta, \nu \}$ and assume that $|\psi|_{C^2} < \alpha$ on $[0, (J + 1)T]$. Employing Theorem \ref{new KE structure} (with $t_0 = jT$) and Lemma \ref{exponential decay}, there exists a Calabi-Yau structure, denoted $(\omega_j, J_j)$, such that
\begin{align}
	\sup_{\left[\left(j+\frac{1}{2}\right)T,(j + 1)T\right]}|\psi_j|_{L^2}^2 \leq e^{-\frac{T\lambda}{2}}\sup_{\left[jT,\left(j+\frac{1}	{2}\right)T\right]}|\psi_j|_{L^2}^2. \label{cor eqn 1}
\end{align}
\indent Now, Lemma \ref{exponential decay later implies exponential decay earlier} says that exponential decay at a later time implies exponential decay at an earlier time. In particular, from Lemma \ref{exponential decay later implies exponential decay earlier} and (\ref{cor eqn 1}) it follows that for any $k \in \{ \frac{n}{2} : n \in \mathbb{Z} \text{ and } 1 \leq n \leq 2j+1 \}$,
\begin{align}
	\sup_{\left[kT,\left(k + \frac{1}{2}\right)T\right]}|\psi_j|_{L^2}^2 \leq e^{-\frac{T\lambda}{2}}\sup_{\left[\left(k-\frac{1}	{	2}\right)T,kT\right]}|\psi_j|_{L^2}^2. \label{psi_j L2 k estimate}
\end{align}
Applying (\ref{psi_j L2 k estimate}) with $k = \frac{1}{2}$ implies that for any $t \in \left[\frac{1}{2}T,T \right]$,  
\begin{align}
	|\psi_j(t)|_{L^2}^2 \leq e^{-\frac{T\lambda}{2}}\sup_{\left[0,\frac{1}{2}T\right]}|\psi_j|_{L^2}^2 \leq e^{-\frac{\lambda t}{2}}\sup_{\left[0,\frac{1}{2}T\right]}|\psi_j|_{L^2}^2. \label{psi_j L2 k estimate 2}
\end{align}
\indent Next, using (\ref{psi_j L2 k estimate}) with $k = \frac{1}{2}$ and $k = 1$ yields
\begin{align*}
	\sup_{\left[T,\frac{3}{2}T\right]}|\psi_j|_{L^2}^2 \leq e^{-\frac{T\lambda}{2}}\sup_{\left[\frac{1}{2}T,T\right]}|\psi_j|_{L^2}^2 \leq e^{-T\lambda}\sup_{\left[0,\frac{1}{2}T\right]}|\psi_j|_{L^2}^2.
\end{align*}
Therefore it follows that for $t \in \left[T,\frac{3}{2}T \right]$,
\begin{align}
	|\psi_j(t)|_{L^2}^2 \leq e^{-T\lambda} \sup_{\left[0,\frac{1}{2}T\right]}|\psi_j|_{L^2}^2 \leq e^{-\frac{\lambda t}{2}}\sup_{\left[0,\frac{1}{2}T\right]}|\psi_j|_{L^2}^2. \label{psi_j L2 k estimate 3}
\end{align}
Combining (\ref{psi_j L2 k estimate 2}) and (\ref{psi_j L2 k estimate 3}) yields $L^2$ exponential decay of $\psi_j(t)$ on $\left[\frac{1}{2}T, \frac{3}{2}T \right].$ Iterating this argument, we see that for $t \in \left[\frac{1}{2}T, (j + 1)T \right]$,
\begin{align*}
	|\psi_j(t)|_{L^2}^2 \leq e^{-\frac{\lambda t}{2}}\sup_{\left[0,\frac{1}{2}T\right]}|\psi_j|_{L^2}^2 \leq Ce^{-\frac{\lambda t}{2}}.
\end{align*}
Notice that $C$ is independent of $j$. Indeed by assumption $|\psi(t)|_{C^2} < \alpha$ on $[0,(j+1)T]$. Hence part $(2)$ of Theorem \ref{new KE structure} and the triangle inequality imply that $|\psi_j(t)|_{C^2} < C$ on $[0,(j+1)T]$, where $C$ is independent of $j$.
\qed

\medskip
Notice that the decay estimate from Corollary \ref{exp decay cor} may fail to hold for intervals beyond $I_j$. In order to maintain exponential decay we want to choose another Calabi-Yau structure $(\omega_{j+1},J_{j+1})$ to which the solution exponentially converges. To ensure that we can continue this process we need to prove that $|\psi(t)|_{C^2}$ is small for all time so that Corollary \ref{exp decay cor} may be applied on any interval. This is the purpose of the following theorem. As a corollary we will prove the existence of a Calabi-Yau structure, denoted $(\omega_{KE},J_{KE})$, to which the flow exponentially converges.

\begin{thm} \label{long-time existence}
Let $(M, \til{\omega}, \til{J})$ be a closed complex manifold with $(\til{\omega},\til{J})$ a Calabi-Yau structure. Given $\epsilon' > 0$ and $k \geq 0$ there exists $\epsilon > 0$ so that $|\rho(0)|_{C^{\infty}} < \epsilon$ implies that $|\psi(t)|_{C^k} < \epsilon'$ on $[0, \infty)$.
\end{thm}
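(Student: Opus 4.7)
The plan is a contradiction argument paralleling Theorem \ref{start close, stay close}. Suppose the conclusion fails: then there exist $\epsilon' > 0$, $k \geq 0$, and a sequence $\epsilon_n \to 0$ such that for each $n$ the associated flow reaches $|\psi(T_n^*)|_{C^k} = \epsilon'$ at some first time $T_n^* < \infty$, with $|\psi(t)|_{C^k} < \epsilon'$ strictly on $[0, T_n^*)$. I will derive a bound of the form $|\psi(T_n^*)|_{C^k} < \epsilon'$ for $\epsilon_n$ sufficiently small, contradicting the definition of $T_n^*$.

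First, I would fix a large parameter $T > 0$ and apply Corollary \ref{exp decay cor} iteratively on $[0, T_n^*]$. For each $j$ with $(j+1)T \leq T_n^*$, this yields a Calabi-Yau structure $(\omega_j, J_j)$ whose associated deviation $\rho_j(t) = (\omega(t)-\omega_j, J(t)-J_j)$ satisfies $|\rho_j(t)|_{L^2}^2 \leq C e^{-\lambda t/2}$ on $[0,(j+1)T]$. A parabolic bootstrap modeled on Lemma \ref{L2 to Ck decay} — evolving $|\nabla^p \psi_j|_{L^2}^2$ with a smooth time cutoff to absorb boundary terms — upgrades this to $C^k$ exponential decay $|\rho_j(t)|_{C^k} \leq C_1 e^{-\mu t}$ for some $\mu > 0$. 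Carefully tracking constants through Theorem \ref{new KE structure}(2) shows that $C_1$ scales with $\sup_{[0, T_n^*]}|\psi|_{C^k} \leq \epsilon'$, rather than being an absolute constant.

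Next I would estimate the drift of the reference Calabi-Yau structures. Evaluating $\rho_{j+1}$ and $\rho_j$ at $t=(j+1)T$ and applying the triangle inequality gives $|(\omega_{j+1}, J_{j+1}) - (\omega_j, J_j)|_{C^k} \leq 2 C_1 e^{-\mu(j+1)T}$. Telescoping, together with the bound $|(\omega_1, J_1) - (\til{\omega}, \til{J})|_{C^k} \leq C_2 |\rho(0)|_{C^\infty}$ obtained by combining Theorem \ref{new KE structure}(2) with Theorem \ref{start close, stay close} on $[0, 2T]$, yields
\begin{align*}
|(\omega_j, J_j) - (\til{\omega}, \til{J})|_{C^k} \leq \frac{2 C_1 e^{-\mu T}}{1 - e^{-\mu T}} + C_2 |\rho(0)|_{C^\infty}.
\end{align*}

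For $t > T_0$ with $T_0$ chosen so $C_1 e^{-\mu T_0} < \epsilon'/3$, decomposing $\psi(t) = \psi_j(t) + \til{\psi}_j$ for $j$ such that $t \in [jT, (j+1)T]$ gives $|\psi(t)|_{C^k} \leq C_1 e^{-\mu t} + 2C_1 e^{-\mu T}/(1-e^{-\mu T}) + C_2 \epsilon_n$; I then choose $T > T_0$ large enough that the middle term is $< \epsilon'/3$ and $\epsilon_n$ so small that $C_2 \epsilon_n < \epsilon'/3$. For $t \in [0, T_0]$ I invoke Theorem \ref{start close, stay close} directly with $T_0$ and $\epsilon'/2$ in place of $T$ and $\epsilon'$, bounding $|\psi(t)|_{C^k} < \epsilon'/2$ for $\epsilon_n$ further restricted. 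These estimates together yield $|\psi(T_n^*)|_{C^k} < \epsilon'$, the desired contradiction. The main obstacle is verifying the linear scaling of $C_1$ in $\sup|\psi|$: this requires revisiting the proofs of Corollary \ref{exp decay cor} and Lemmas \ref{exponential decay}, \ref{exponential decay later implies exponential decay earlier} to confirm that the constants in the $L^2$ exponential decay are proportional to $|\psi_j|_{L^2}$ at an initial time, which by Theorem \ref{new KE structure}(2) scales with $\sup|\psi|$; the $C^k$ bootstrap must then preserve this proportionality.
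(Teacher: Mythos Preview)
Your proposal is correct and follows essentially the same strategy as the paper: a contradiction argument that iterates Corollary \ref{exp decay cor} over intervals $[jT,(j+1)T]$, applies a parabolic regularity step (packaged in the paper as Lemma \ref{parabolic regularity}, exactly the time-cutoff bootstrap you describe) to upgrade $L^2$ decay of $\psi_j$ to $C^{k+2}$ decay, and sums the resulting geometric series to bound $|\psi(t)|_{C^k}$ strictly below $\epsilon'$. The only cosmetic difference is in how the series is assembled---the paper integrates $\big|\frac{\partial}{\partial t}\psi\big|_{C^k} = \big|\frac{\partial}{\partial t}\psi_j\big|_{C^k}$ over each $I_j$ (using that the reference Calabi--Yau structure is static) rather than telescoping the drift $|(\omega_{j+1},J_{j+1})-(\omega_j,J_j)|_{C^k}$---and your concern about the scaling of $C_1$ is resolved there by the explicit factor $\sup_{[0,T/2]}|\psi_j|_{L^2}^2$ in Lemma \ref{parabolic regularity} together with the choice $\sup_{[0,2T]}|\psi|_{C^k}<\epsilon'/e^{T\lambda}$.
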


\begin{proof}
We will employ Theorem \ref{start close, stay close}. To do this we make explicit $T$, $\epsilon$, and $\epsilon'$.
\begin{enumerate}
	\item Let $T$ be large enough so that
		\begin{align*}
			\frac{T^2C_3(k+2)}{e^{T\lambda}-1} + \frac{1}{e^{T\lambda}}<\frac{1}{2}.
		\end{align*}
		Where $C_3(k+2)$ is a constant depending only on $k$ and $(\til{\omega},\til{J})$.
	\item Choose
		\begin{align*} 
			\epsilon'= \alpha.
		\end{align*}
		Where $\alpha$ is the constant from Corollary \ref{exp decay cor}.
	\item Choose $\epsilon$ sufficiently small so that $(\omega(t),J(t))$ exists on $[0,3T]$ and
		\begin{align*}
			\sup_{[0,2T]}|\psi(t)|_{C^k}<\frac{\epsilon'}{e^{T\lambda}}.
		\end{align*}
\end{enumerate}

We want to prove that $|\psi(t)|_{C^k} < \epsilon'$ on $[0, \infty)$. Suppose by way of contradiction there is a finite maximal time $T'$ such that $|\psi(t)|_{C^k} < \epsilon'$ on $[0,T')$ with $k \geq 2$. Write $[0,T') = [0,T] \cup [T,2T] \cup \cdots \cup [NT, T')$ and let $[jT, (j+1)T] = I_j$. Also let $(\omega_j,J_j)$ denote the Calabi-Yau structure, from Corollary \ref{exp decay cor}, to which $(\omega(t),J(t))$ exponentially converges on $I_j$. Using (\ref{cor eqn 1}) and applying Lemma \ref{exponential decay later implies exponential decay earlier} iteratively we have
\begin{align}
	\sup_{I_{j-1} \cup I_j}|\psi_j|^2_{L^2}\leq e^{-\lambda T(j-1)}\sup_{[0,\frac{T}{2}]}|\psi_j|^2_{L^2}.\label{decay}
\end{align}

We again use a parabolic regularity argument to prove that from (\ref{decay}) we can obtain a $C^{k + 2}$ decay estimate.

\lemma \label{parabolic regularity} There exists a constant $\alpha > 0$ so that if both $|\psi|_{C^2} < \alpha$ on $I_j$ and $\sup_{I_{j-1} \cup I_j}|\psi_j|^2_{L^2}\leq e^{-\lambda T(j-1)}\sup_{[0,\frac{T}{2}]}|\psi_j|^2_{L^2}$, then there exists a constant $C(k + 2)$ such that
$$\sup_{I_j}|\psi_j|_{C^{k + 2}} < C(k + 2) Te^{-T\lambda (j - 1) }\sup_{\left[0,\frac{1}{2}T \right]}|\psi_j|_{L^2}^2.$$
\proof
The proof of Lemma \ref{parabolic regularity} uses essentially the same argument as the proof of Lemma \ref{L2 to Ck decay} hence we omit some of the details.\\
\indent From (\ref{L2 evolution and estimate}) we have
\begin{align*}
	\dt |\psi_j|_{L^2}^2 \leq -|\nabla \psi_j|_{L^2}^2 + C_1|\psi_j|_{L^2}^2.
\end{align*}
Fix $t \in I_j$ and integrate from $(j - 1)T$ to $t$;
\begin{gather} \label{parabolic regularity L12 1}
\begin{split}
	\int_{(j - 1)T}^{t}|\nabla \psi_j|_{L^2}^2 &\leq |\psi_j((j-1)T)|_{L^2}^2 + C_1\int_{(j-1)T}^{t}|\psi_j|_{L^2}^2 \\
	&\leq CTe^{-T\lambda (j - 1)} \sup_{\left[0,\frac{1}{2}T \right]}|\psi_j|_{L^2}^2, 
\end{split}
\end{gather}
where the second inequality follows from the $L^2$ exponential decay assumption.\\
\indent Next let $\theta(s)$ be a smooth function which is 0 for $s \leq (j - 1)T$, monotonically increasing from 0 to 1 for $s\in [(j - 1)T,jT]$ and equal to 1 for $s \geq jT$. As in Lemma \ref{parabolic regularity}, $\theta(s)$ will be used to deal with the boundary terms that arise in the estimates below. Now from (\ref{1}) we have 
\begin{align*}
	\dt |\nabla \psi_j|_{L^2}^2 \leq C_5|\psi|_{L^2}^2+ C_6 |\nabla \psi|_{L^2}^2
\end{align*}
and since $\theta(s)$ and its derivative are uniformly bounded, it follows that
\begin{align*}
	\frac{\partial}{\partial s}\left(\theta(s) |\nabla \psi_j(s)|_{L^2}^2\right) \leq C_7|\psi_j|_{L^2}^2+ C_8 |\nabla \psi_j|_{L^2}^2.
\end{align*}
We now integrate from $(j - 1)T$ to $t \in I_j$ and use that $\theta((j-1)T) = 0$ and $\theta(t) = 1$,
\begin{align}
	|\nabla \psi_j (t)|_{L^2}^2 &\leq C_7 \int_{(j-1)T}^{t}|\psi_j|_{L^2}^2 + C_8\int_{(j-1)T}^{t}|\nabla \psi_j|_{L^2}^2 \label{parabolic regularity L12 2} \\
	&\leq  CTe^{-T\lambda (j - 1)} \sup_{\left[0,\frac{1}{2}T \right]}|\psi_j|_{L^2}^2, \label{parabolic regularity L12 3}
\end{align}
where the first and second terms on the right-hand side of (\ref{parabolic regularity L12 2}) were bounded using the $L^2$ exponential decay assumption and (\ref{parabolic regularity L12 1}) respectively. Notice that (\ref{parabolic regularity L12 2}) and (\ref{parabolic regularity L12 3}) yield the desired $L^{1,2}$ exponential decay estimate.\\
\indent Continuing in this way, on $I_j$ we get
\begin{align*}
	|\psi_j (t)|_{L^{p,2}}^2 \leq C(p)Te^{-T\lambda (j - 1)} \sup_{\left[0,\frac{1}{2}T \right]}|\psi_j|_{L^2}^2,
\end{align*}
moreover by the Sobolev Embedding Theorem, for any $t \in I_j$
\begin{align*}
	|\psi_j (t)|_{C^{k+2}} \leq C(k + 2)Te^{-T\lambda (j - 1)} \sup_{\left[0,\frac{1}{2}T \right]}|\psi_j|_{L^2}^2.
\end{align*}
\qed
 
 \medskip
From Lemma \ref{parabolic regularity} it follows that
\begin{align*}
	\sup_{I_j}\bigg|\dt \psi \bigg|_{C^k} \leq C_2(k + 2) Te^{-T\lambda (j - 1) }\epsilon' 
\end{align*}
since $\big|\dt \psi \big|_{C^k} = \big|\dt \psi_j \big|_{C^k} \leq C\sup_{I_j}|\psi_j|_{C^{k+2}}$. Hence, for $j \geq 2$ and any $t\in I_j$, by integrating we see that 
\begin{align*}
	|\psi(t)|_{C^k} &\leq T \sup_{I_j}\bigg| \dt \psi \bigg|_{C^k} + \sup_{I_{j-1}}|\psi|_{C^k} \\
	&\leq T \sum_{l=2}^{j}\sup_{I_l}\bigg| \dt \psi \bigg|_{C^k} + \sup_{I_0 \cup I_1}|\psi|_{C^k} \\
	&< \epsilon' C_3(k + 2)T^2\left( \frac{1}{e^{\lambda T}} +\frac{1}{e^{2\lambda T}}+\dots +\frac{1}{e^{(j-1)\lambda T}} 		\right)+ \sup_{I_0 \cup I_1}|\psi|_{C^k} \\
	&\leq \epsilon' \frac{C_3(k+2)T^2}{e^{\lambda T}-1}+ \sup_{I_0 \cup I_1}|\psi|_{C^k} \\
	&\leq \epsilon' \frac{C_3(k+2)T^2}{e^{\lambda T}-1}+ \frac{\epsilon'}{e^{T\lambda}} \\
	&< \frac{\epsilon'}{2}.
\end{align*}
Where the final inequality is from our choice of $T$ and $\epsilon$. The key observation here is that the above inequality is independent of both $j \geq 2$ and $t \in I_j$. Hence the above inequality holds for $j = N$ which contradicts the maximality of $T'$. Therefore $T'=\infty$.
\end{proof}

The important thing to notice about Theorem \ref{long-time existence} is that it allows us to find a Calabi-Yau structure $(\omega_{KE},J_{KE})$ to which the flow converges.
\cor Under the assumptions of Theorem \ref{dynamic stability} with $(M, \til{\omega},\til{J})$ a Calabi-Yau manifold, there exists a Calabi-Yau structure $(\omega_{KE},J_{KE})$ to which the flow exponentially converges.

\proof By Theorem \ref{long-time existence} we know that given $\epsilon' > 0$ and $k \geq 0$, there exists $\epsilon > 0$ such that if $|\rho(0)|_{C^{\infty}} < \epsilon$, then $|\psi(t)|_{C^k} < \epsilon'$ for all $t \geq 0$. Let $\epsilon' = \alpha$, where $\alpha$ is the small constant from Corollary \ref{exp decay cor}. Recall that $\rho_j(t) = (\omega(t) - \omega_j,J(t) - J_j)$. Now since $|\psi(t)|_{C^k} < \alpha$ for all $t \geq 0$, by Corollary \ref{exp decay cor} there exists a sequence of Calabi-Yau structures $\{ (\omega_j,J_j) \}$ such that $\rho_j(t)$ exponentially decays in $L^2$ for all $t\in[0,(j+1)T]$ and for each $j$. Specifically,
\begin{align}
	|\rho_j(t)|_{L^2} \leq Ce^{-\frac{\lambda t}{2}} \label{rho_j exp decay est}
\end{align}
for $t \in [0,(j+1)T]$ and for each $j$. \\
\indent Next, by part $(2)$ of Theorem \ref{new KE structure}, each of these Calabi-Yau structures $(\omega_j,J_j)$ is contained in a fixed neighborhood of $(\til{\omega},\til{J})$, in particular $|(\omega_j - \til{\omega},J_j-\til{J})|_{C^k}\leq C\epsilon'$. Therefore as $j\rightarrow \infty$ we have a convergent subsequence $(\omega_j,J_j)\rightarrow (\omega_{KE},J_{KE})$. And hence by (\ref{rho_j exp decay est}) we have $L^2$ exponential convergence of $(\omega(t),J(t))$ to $(\omega_{KE},J_{KE})$
for all $t \geq 0$. Finally applying the parabolic regularity argument of Lemma \ref{L2 to Ck decay} it follows that the exponential convergence of $(\omega(t),J(t))$ to $(\omega_{KE},J_{KE})$ is $C^k$ convergence, that is
\begin{align}
	|(\omega(t)-\omega_{KE},J(t) - J_{KE})|_{C^k} \leq Ce^{-\frac{\lambda t}{2}}. \label{rho_KE exp decay est}
\end{align}

\noindent In other words, $(\omega(t),J(t))$ is contained in a ball of radius $Ce^{-\frac{\lambda t}{2}}$ of the limit Calabi-Yau structure $(\omega_{KE},J_{KE})$ for all $t \geq 0$. This gives us the desired exponential decay estimate.

\qed

\bibliographystyle{plain}
\bibliography{biblio}

\end{document}